\tikzstyle{internal} = [draw, fill, shape=circle]
\tikzstyle{external} = [shape=circle]
\tikzstyle{square}   = [draw, fill, rectangle]
\tikzstyle{triangle} = [draw, fill, regular polygon, regular polygon sides=3, inner sep=3pt]
\tikzstyle{pentagon} = [draw, fill, regular polygon, regular polygon sides=5, inner sep=2pt, minimum size=14pt]
\tikzset{every fit/.append style=text badly centered}
\tikzset{>=latex} 
\newcommand{\Ex}{\mathop{\mathbb{{}E}}\nolimits}
\renewcommand{\Pr}{\mathop{\mathrm{Pr}}\nolimits}
\def\*#1{\mathbf{#1}}
\def\+#1{\mathcal{#1}}
\def\-#1{\mathrm{#1}}
\def\=#1{\mathbb{#1}}
\newcommand{\norm}[2]{\ensuremath{\Vert #2 \Vert_{#1}}}
\newcommand{\abs}[1]{\ensuremath{\left\vert#1\right\vert}}
\newcommand{\inner}[2]{\ensuremath{\left\langle #2 \right\rangle}_{#1}}
\newcommand{\eps}{\varepsilon}
\newcommand{\RWup}[1]{\ensuremath{P^{\wedge}_{#1}}}
\newcommand{\Pup}[1]{\ensuremath{P^{\uparrow}_{#1}}}
\newcommand{\RWdown}[1]{\ensuremath{P^{\vee}_{#1}}}
\newcommand{\Pdown}[1]{\ensuremath{P^{\downarrow}_{#1}}}
\newcommand{\RWBS}{\ensuremath{P_{\textnormal{BX},\pi}}}
\newcommand{\Diri}[2]{\ensuremath{\+E_{#1}\left(#2\right)}}
\newcommand{\Ent}[2]{\ensuremath{\textnormal{Ent}_{#1}\left(#2\right)}}
\newcommand{\Var}[2]{\ensuremath{\textnormal{Var}_{#1}\left(#2\right)}}
\newcommand{\mixingtime}[1]{\ensuremath{t_{\textnormal{mix}}(#1)}}
\newcommand{\pimin}{\ensuremath{\pi_{\textnormal{min}}}}
\newcommand{\diag}{\operatorname{diag}}
\newcommand{\transpose}[1]{\ensuremath{#1^{\textnormal{\texttt{T}}}}}
\newcommand{\defeq}{:=}
\newcommand{\identity}{\ensuremath{\*I}}
\newcommand{\SCP}{\textbf{SCP}}
\newcommand{\NR}{\textbf{NR}}
\newcommand{\SRP}{\textbf{SRP}}
\newcommand{\SLC}{\textbf{SLC}}
\newcommand{\NCD}{\textbf{NCD}}
\newcommand{\covers}{\vartriangleright}
\newtheorem{theorem}{Theorem}
\newtheorem{lemma}[theorem]{Lemma}
\newtheorem{proposition}[theorem]{Proposition}
\newtheorem{corollary}[theorem]{Corollary}
\newtheorem{definition}[theorem]{Definition}
\newtheorem*{remark}{Remark}
\crefname{theorem}{Theorem}{Theorems}
\crefname{observation}{Observation}{Observations}
\crefname{claim}{Claim}{Claims}
\crefname{condition}{Condition}{Conditions}
\crefname{algorithm}{Algorithm}{Algorithms}
\crefname{property}{Property}{Properties}
\crefname{example}{Example}{Examples}
\crefname{fact}{Fact}{Facts}
\crefname{lemma}{Lemma}{Lemmas}
\crefname{corollary}{Corollary}{Corollaries}
\crefname{definition}{Definition}{Definitions}
\crefname{remark}{Remark}{Remarks}
\crefname{proposition}{Proposition}{Propositions}
\crefname{equation}{equation}{equations}
\crefname{enumi}{Case}{Case}
\def\prob#1#2#3{\goodbreak\begin{list}{}{\labelwidth\z@ \itemindent-\leftmargin
      \itemsep\z@  \topsep6\p@\@plus6\p@
      \let\makelabel\descriptionlabel}
  \item[\textbf{Name}]#1
  \item[\textbf{Instance}]#2
  \item[\textbf{Output}]#3
  \end{list}}
\title{Modified log-Sobolev inequalities for strongly log-concave distributions}
\author{Mary Cryan}
\author{Heng Guo}
\author{Giorgos Mousa}
\address{School of Informatics, University of Edinburgh, Informatics Forum, Edinburgh, EH8 9AB, United Kingdom.}
\email{mcryan@inf.ed.ac.uk, hguo@inf.ed.ac.uk, g.mousa@ed.ac.uk}
\begin{document}

\maketitle

\begin{abstract}
  We show that the modified log-Sobolev constant for a natural Markov chain 
  which converges to an $r$-homogeneous strongly log-concave distribution is at least $1/r$.
  Applications include a sharp mixing time bound for the bases-exchange walk for matroids,
  and a concentration bound for Lipschitz functions over these distributions.
\end{abstract}

\section{Introduction}

Let $\pi: 2^{[n]}\rightarrow \mathbb{R}_{\geq 0}$ be a discrete distribution, where $[n]=\{1,\ldots,n\}$.
Consider the generating polynomial of $\pi$:
\begin{align*}
    g_{\pi}(\*x)=\sum_{S \subseteq [n]}\pi(S)\prod_{i \in S}x_i.
\end{align*}
We call a polynomial \emph{log-concave} if its logarithm is concave,
and \emph{strongly log-concave} (\SLC) if it is log-concave at the all-ones vector $\*1$ after taking any sequence of partial derivatives.
The distribution $\pi$ is homogeneous and strongly log-concave if $g_{\pi}$ is.

An important example of homogeneous strongly log-concave distributions is the uniform distribution over the bases of a matroid \citep{ALOV19,BH19}.\footnote{For other examples, such as the determinantal point process and its variants, see \citep{ALOV19}.}
This discovery leads to the breakthrough result that the exchange walk over the bases of a matroid is rapidly mixing \citep{ALOV19},
which implies the existence of a fully polynomial-time randomised approximation scheme (FPRAS) for the number of bases of any matroid (given by an independence oracle).

The bases-exchange walk, denoted by $P_{\textnormal{BX}}$, is defined as follows.
In each step, we remove an element from the current basis uniformly at random to get a set $S$.
Then, we move to a basis containing $S$ uniformly at random.\footnote{Notice that to implement this step it may require more than constant time. The chain considered here is sometimes called the modified bases-exchange walk. A common alternative in the literature is to randomly propose an element and then apply a rejection filter. } %
This chain is irreducible and it converges to the uniform distribution over the bases of a matroid.
\cite{BH19} showed that the support of an $r$-homogeneous strongly log-concave distribution $\pi$ must be the set of bases of a matroid.
Thus, to sample from $\pi$, we may use a random walk $\RWBS$ similar to the above.
The only change required is that in the second step we move to a basis $B\supset S$ with probability proportional to $\pi(B)$.

Let $P$ be a Markov chain over a state space $\Omega$, and $\pi$ be its stationary distribution.
To measure the convergence rate of $P$, we use the total variation mixing time,
\begin{align*}
  \mixingtime{P,\eps}\defeq\min_t\left\{t\mid \| P^t(x_0,\cdot)-\pi\|_{\textnormal{TV}}\le \eps\right\},
\end{align*}
where $x_0\in\Omega$ is the initial state and the subscript $\textnormal{TV}$ denotes the total variation distance between two distributions.
The main goal of this paper is to show that for any $r$-homogeneous strongly log-concave distribution $\pi$,
\begin{align}\label{eqn:mixingtime-intro}
  \mixingtime{\RWBS,\eps}\le r\left( \log\log \frac{1}{\pimin}+\log\frac{1}{2\eps^2} \right),
\end{align}
where $\pimin=\min_{x\in\Omega} \pi(x)$.
This will improve the previous bound $\mixingtime{\RWBS,\eps} \le r\left( \log \frac{1}{\pimin}+\log\frac{1}{\eps} \right)$ due to \cite{ALOV19}.
Since $\pimin$ is most commonly exponentially small in the input size (e.g.\ when $\pi$ is the uniform distribution),
the improvement is usually a polynomial factor.
Our upper bound is sharp, 
as it is achieved (up to constant factors) when $\pi$ is the uniform distribution over the bases of some matroids \citep{Jer03}.\footnote{One such example is the matroid defined by a graph which is similar to a path but with two parallel edges connecting every two successive vertices instead of a single edge. Equivalently, this can be viewed as the partition matroid where each block has two elements and each basis is formed by choosing exactly one element from every block. The Markov chain $\RWBS$ in this case is just the 1/2-lazy random walk on the Boolean hypercube.}

Our main improvement is a modified log-Sobolev inequality (mLSI) for $\pi$ and $\RWBS$.
To introduce this inequality, we define the \emph{Dirichlet form} of a reversible Markov chain $P$, over state space $\Omega$, as
\begin{align*}
  \Diri{P}{f,g} \defeq  \sum_{x,y\in\Omega}\pi(x)f(x)\big[\identity-P\big](x,y)g(y),
\end{align*}
where $f,g$ are two functions over $\Omega$, and $\identity$ denotes the identity matrix.
Moreover, let the (normalised) relative entropy of $f:\Omega\rightarrow\=R_{\ge 0}$ be
\begin{align*}
    \Ent{\pi}{f}\defeq\Ex_{\pi}(f\log f)-\Ex_{\pi}f\log \Ex_{\pi}f,
\end{align*}
where we follow the convention that $0\log 0=0$.
If we normalise $\Ex_{\pi}f=1$, 
then $\Ent{\pi}{f}$ is the relative entropy (or Kullback--Leibler divergence) between $\pi(\cdot) f(\cdot)$ and $\pi(\cdot)$.

The modified log-Sobolev constant \citep{BT06} is defined as
\begin{align*}
  \rho(P)\defeq\inf\left\{\frac{\Diri{P}{f,\log f}}{\Ent{\pi}{f}}\mid\; f:\Omega\rightarrow \=R_{\ge 0}\;,\;\Ent{\pi}{f}\neq 0\right\}.
\end{align*}

Our main theorem is the following, which is a special case of \Cref{thm:main-complete}.

\begin{theorem}  \label{thm:main}
  Let $\pi$ be an $r$-homogeneous strongly log-concave distribution,
  and $\RWBS$ is the corresponding bases-exchange walk.
  Then
  \begin{align*}
    \rho(\RWBS)\ge \frac{1}{r}.
  \end{align*}
\end{theorem}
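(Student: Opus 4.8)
The plan is to exploit the local-to-global machinery for high-dimensional expanders, adapted to the entropy (rather than variance) setting, together with the key structural fact that an $r$-homogeneous \SLC\ distribution $\pi$ induces a simplicial complex in which every link is a \emph{spectral} (indeed, one-sided-$0$-local) expander. Concretely, I would first recall the decomposition of $\RWBS$ into "up" and "down" operators: the bases-exchange walk factors as a down-step $\Pdown{r}$ from level $r$ to level $r-1$ followed by an up-step $\Pup{r-1}$ back to level $r$, so that $\RWBS = \Pdown{r}\Pup{r-1}$. One then wants to compare the Dirichlet form $\Diri{\RWBS}{f,\log f}$ to the analogous Dirichlet form one level down, eventually telescoping all the way down to the trivial chain on singletons.

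The heart of the argument is an entropy-decay / local-to-global statement: for each level $k$, the entropy functional satisfies $\Ent{\pi_k}{f} \le \kappa_k\,\Ex[\Ent{\pi_{k-1}^{(x)}}{f}]$-type inequalities, where the expectation is over the $(k-1)$-faces and $\pi_{k-1}^{(x)}$ is the induced distribution on the link; the \SLC\ hypothesis guarantees (via \citet{ALOV18a,BH19}) that every such link is a suitable expander so the local constants $\kappa_k$ are controlled, and multiplying them up the chain yields the factor $r$. In parallel one needs the "entropy contraction under the down operator" estimate, i.e.\ that $\Ent{\pi}{f} - \Ex[\Ent{\pi'}{\Pdown{r}f}]$ is bounded below by a multiple of $\Diri{\RWBS}{f,\log f}$; combining this with the recursive bound on $\Ent{\pi}{f}$ gives $\Diri{\RWBS}{f,\log f} \ge \tfrac1r \Ent{\pi}{f}$, which is exactly $\rho(\RWBS)\ge 1/r$. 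A convenient way to organise the induction is to prove the stronger statement for the complete chain of \Cref{thm:main-complete} across all levels simultaneously, so that the link distributions (which are again \SLC\ of lower homogeneity, by taking partial derivatives of $g_\pi$) feed directly into the inductive hypothesis.

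The step I expect to be the main obstacle is establishing the one-step entropy inequality between consecutive levels — i.e.\ showing that the "local entropy" inequality on $1$-dimensional links (where the complex is just an expander graph or a weighted bipartite-like structure) bounds the global entropy drop with the right constant. Variance-based local-to-global (the spectral $\lambda$-decomposition used by \citet{ALOV18a}) is comparatively soft, but entropy is not a quadratic form, so one cannot simply diagonalise; instead one must use a genuinely nonlinear argument — likely a direct convexity/tensorization estimate for $\Ent$ on a two-level structure, plus the observation that \SLC\ forces the relevant correlation/expansion parameter at each link to be exactly what is needed for the constants to telescope to $r$ rather than something larger. Handling the boundary of the induction (the base case at level $1$, where the claim reduces to a modified log-Sobolev bound for a simple walk) and verifying that the normalisation $\Ex_\pi f = 1$ can be assumed without loss of generality are routine and I would dispatch them quickly.
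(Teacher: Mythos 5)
Your outline tracks the paper's actual strategy closely: the paper likewise uses the factorisation $\RWBS=\RWdown{r}=\Pdown{r}\Pup{r-1}$, proves the stronger all-levels statement of \Cref{thm:main-complete}, and reduces everything to a level-by-level entropy contraction $\Ent{\pi_k}{f^{(k)}}\ge\frac{k}{k-1}\Ent{\pi_{k-1}}{f^{(k-1)}}$ (\Cref{lem:entropy-contraction}), from which $\rho(\RWdown{k})\ge 1/k$ follows by a single application of Jensen's inequality ($\log\Pup{k-1}f^{(k)}\ge\Pup{k-1}\log f^{(k)}$) — the soft ``entropy does not increase under the down operator'' half you mention. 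The induction is organised as you suggest: $\pi_{k+1}$ is written as the mixture $\sum_v\pi_1(v)\pi_{v,k}$, the chain rule for entropy splits $\Ent{\pi_{k+1}}{f^{(k+1)}}$ into link contributions plus $\Ent{\pi_1}{f^{(1)}}$, and the inductive hypothesis is applied to the contracted matroids $\+M_v$, which are again \SLC\ of lower homogeneity. One wrinkle you would still need to discover is that the inductive hypothesis must \emph{also} be applied on the original matroid, telescoped from level $k$ down to level $1$, in order to absorb the leftover $\Ent{\pi_1}{f^{(1)}}$ term with the correct coefficient $\frac{k+1}{k}$.

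However, the step you yourself flag as ``the main obstacle'' is a genuine gap, and it is exactly where strong log-concavity enters quantitatively; your proposal does not supply it. The paper's base case (\Cref{lem:k=2}) proves $\Ent{\pi_2}{f^{(2)}}\ge 2\Ent{\pi_1}{f^{(1)}}$ by linearising the entropy with $a\log\frac{a}{b}\ge a-b$, taking $a=f^{(2)}(\{u,v\})$ and $b=f^{(1)}(u)f^{(1)}(v)$, which reduces the claim to the quadratic-form bound $\transpose{\left(f^{(1)}\right)}W_{\emptyset}f^{(1)}\le w(\emptyset)$; this in turn rests on the fact that $W_\emptyset$, a positive multiple of $\nabla^2 g_\pi(\*1)$, has at most one positive eigenvalue (\Cref{prop:one-eigenvalue} and \Cref{lem:quadratic-bound}). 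Your phrase about \SLC\ forcing ``the relevant correlation/expansion parameter at each link to be exactly what is needed'' is a placeholder for precisely this computation, and it cannot be imported from the spectral local-to-global machinery of \cite{ALOV18a}: that machinery only bounds the spectral gap and hence gives the weaker $O\left(r\log\frac{1}{\pimin}\right)$ mixing bound, whereas getting the entropy functional to contract with constant exactly $1-\frac{1}{k}$ requires the nonlinear $a\log\frac{a}{b}\ge a-b$ step on top of the one-positive-eigenvalue property. Until that base case is written down, the proposed induction has nothing to stand on.
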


Since $\mixingtime{P,\eps}\le \frac{1}{\rho(P)}\left( \log\log \frac{1}{\pimin}+\log\frac{1}{2\eps^2} \right)$ \citep[cf.][]{BT06},
\Cref{thm:main} directly implies the mixing time bound \eqref{eqn:mixingtime-intro}.

In fact, we show more than \Cref{thm:main}.
Following \cite{ALOV19} and \cite{KO18},
we stratify independent sets of the matroid $\+M$ by their sizes,
and define two random walks for each level,
depending on whether they add or delete an element first.
For instance, the bases-exchange walk $\RWBS$ is the ``delete-add'' or ``down-up'' walk for the top level.
We give lower bounds for the modified log-Sobolev constants of both random walks for all levels.
For the complete statement, see \Cref{sec:main} and \Cref{thm:main-complete}.

The previous work of \cite{ALOV19}, building upon \citep{KO18}, focuses on the spectral gap of $\RWBS$.
It is well known that lower bounds of the modified log-Sobolev constant are stronger than those of the spectral gap.
Thus, we need to seek a different approach.
Our key lemma, \Cref{lem:entropy-contraction}, shows that the relative entropy decays by a factor of $1-\frac{1}{k}$ 
when we go from level $k$ to level $k-1$.
\Cref{thm:main} is a simple consequence of this lemma and Jensen's inequality.
In order to prove this lemma, we used a decomposition idea to inductively bound the relative entropy.
Although similar ideas have appeared before \citep{LY98,JSTV04,Mor09,Mor13},
our approach does not seem to fall into any existing framework.

Prior to our work, similar bounds have been obtained mostly for strong Rayleigh distributions,
which, introduced by \cite{BBL09}, are a proper subset of strongly log-concave distributions.
\cite{HS19} showed a lower bound on the modified log-Sobolev constant for strong Rayleigh distributions,\footnote{The result of \cite{HS19} in fact requires a weaker assumption, namely the \emph{stochastic covering property} (\SCP). We construct examples in \Cref{sec:SCP} to show that \SCP{} and \SLC{} are in fact incomparable.}
improving upon the spectral gap bound of \cite{AOR16}.
The work of \cite{HS19} builds upon the previous work of \cite{JSTV04} for balanced matroids \citep{FM92}.
All of these results follow an inductive framework inspired by \cite{LY98},
which is apparently difficult to carry out in the case of general matroids or strongly log-concave distributions.
Our analysis of the relative entropy took a different path from this line of work.

By the standard Herbst argument \citep[see, e.g.,][]{Goel04,Sam05,BLM13}, 
\Cref{thm:main} also implies the following concentration bound.
\begin{corollary}\label{cor:concentration}
  Let $\pi$ be an $r$-homogeneous strongly log-concave distribution with support $\Omega\subset 2^{[n]}$,
  and $\RWBS$ be the corresponding bases-exchange walk.
  For any observable function $f:\Omega\rightarrow\=R$ and $a\geq 0$,
  \begin{align*}
    \Pr_{x\sim \pi}(\abs{f(x)-\Ex_{\pi} f} \geq a) ~\leq~ 2\exp\left(-\frac{a^2}{2rv(f)}\right),
  \end{align*}
  where $v(f)$ is the maximum of one-step variances,
  \begin{align*}
    v(f) := \max_{x \in \Omega}\left\{\sum_{y \in \Omega}\RWBS(x,y)(f(x)-f(y))^2\right\}.
  \end{align*}
\end{corollary}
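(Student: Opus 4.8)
The plan is to derive this concentration inequality from the modified log-Sobolev inequality of Theorem~\ref{thm:main} via the standard Herbst argument (exponential moment / Laplace transform method). First I would recall that a modified log-Sobolev inequality $\rho(\RWBS)\ge 1/r$ means that for every nonnegative $g:\Omega\to\=R_{\ge 0}$ we have $\rho\cdot\Ent{\pi}{g}\le\Diri{\RWBS}{g,\log g}$. Given the observable $f$ and a parameter $\lambda>0$, I would apply this with $g=e^{\lambda f}$, so that $\Ent{\pi}{e^{\lambda f}}\le \frac{1}{\rho}\,\Diri{\RWBS}{e^{\lambda f},\lambda f}$. Writing $H(\lambda)\defeq\Ex_\pi e^{\lambda f}$ for the Laplace transform, the left-hand side is $\lambda H'(\lambda)-H(\lambda)\log H(\lambda)$, which upon dividing by $\lambda^2 H(\lambda)$ becomes $\frac{d}{d\lambda}\!\left(\frac{1}{\lambda}\log H(\lambda)\right)$.

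The key analytic input is a pointwise bound on the Dirichlet form term. Using the elementary inequality $(e^{a}-e^{b})(a-b)\le \frac14(a-b)^2(e^a+e^b)$ (or the cleaner bound $(e^{a}-e^{b})(a-b)\le \frac12(a-b)^2(e^a+e^b)$, whichever constant the calculation needs), together with the symmetrised expression
\begin{align*}
  \Diri{\RWBS}{e^{\lambda f},\lambda f}=\frac12\sum_{x,y}\pi(x)\RWBS(x,y)\big(e^{\lambda f(x)}-e^{\lambda f(y)}\big)\big(\lambda f(x)-\lambda f(y)\big),
\end{align*}
I would bound this by $\tfrac{\lambda^2}{4}\sum_{x,y}\pi(x)\RWBS(x,y)(f(x)-f(y))^2 e^{\lambda f(x)}$ (after using reversibility to collect the exponential on one side), which is at most $\tfrac{\lambda^2}{4}v(f)\,H(\lambda)$ by the definition of $v(f)$. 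Substituting back, $\frac{d}{d\lambda}\!\left(\frac{1}{\lambda}\log H(\lambda)\right)\le \frac{v(f)}{4\rho}$, and integrating from $0$ (where $\frac{1}{\lambda}\log H(\lambda)\to \Ex_\pi f$ as $\lambda\to 0$) gives $\log H(\lambda)\le \lambda\Ex_\pi f+\frac{v(f)}{4\rho}\lambda^2$, i.e.\ a sub-Gaussian bound on $f-\Ex_\pi f$ with variance proxy $\tfrac{v(f)}{2\rho}\le \tfrac{r\,v(f)}{2}$. A standard Chernoff/Markov step on $e^{\lambda(f-\Ex_\pi f)}$, optimised over $\lambda$, then yields the one-sided tail $\Pr(f-\Ex_\pi f\ge a)\le\exp(-a^2/(2rv(f)))$; applying the same argument to $-f$ and a union bound gives the stated two-sided inequality.

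I expect the main obstacle to be getting the constants exactly right: one has to be careful about the factor of $\tfrac12$ from symmetrising the Dirichlet form versus the constant in the elementary inequality bounding $(e^a-e^b)(a-b)$, and about whether the one-step variance $v(f)$ as defined (without a factor $\tfrac12$) matches the symmetrised sum. A secondary subtlety is the $\lambda\to 0$ boundary behaviour of $\tfrac1\lambda\log H(\lambda)$, which requires a brief justification that $H$ is smooth and $H(0)=1$, $H'(0)=\Ex_\pi f$ — routine since $\Omega$ is finite. Neither of these is a genuine difficulty; the argument is the textbook Herbst/entropy method specialised to a finite reversible chain, and the only real content borrowed from this paper is the inequality $\rho(\RWBS)\ge 1/r$.
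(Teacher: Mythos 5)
Your proposal is correct and follows essentially the same route as the paper, which proves the corollary via exactly this Herbst argument (Lemma~\ref{lem:herbst-argument}, using the inequality $z(e^z+1)\ge 2(e^z-1)$, the symmetrised Dirichlet form, reversibility, the derivative of $\frac{1}{t}\log\Ex_{\pi}F_t$, and Markov's inequality) combined with Theorem~\ref{thm:main}. The only point to settle is the constant you left open: the elementary inequality holds with $\tfrac12$, i.e.\ $(e^a-e^b)(a-b)\le\tfrac12(a-b)^2(e^a+e^b)$, so after symmetrising and using reversibility the Dirichlet form is bounded by $\tfrac{\lambda^2}{2}v(f)H(\lambda)$ (not $\tfrac{\lambda^2}{4}$), which yields $\log H(\lambda)\le\lambda\Ex_{\pi}f+\tfrac{v(f)}{2\rho}\lambda^2$ and hence precisely the stated exponent $-a^2/(2rv(f))$.
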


There have been a number of results concerning concentration inequalities for Lipshitz functions of negatively correlated variables.
\cite{pemantle_peres_2014} showed concentration for variables satisfying the \emph{stochastic covering property} (\SCP),
which includes strong Rayleigh distributions as special cases.
(See also \citealp{HS19}.)
Correcting an earlier proof of \cite{DR98},
\cite{GV18} showed concentration for variables with \emph{negative regression} (\NR), 
a property even weaker than \SCP.

For a $c$-Lipschitz function (under the graph distance in the bases-exchange graph), $v(f)\le c^2$.
Thus, \Cref{cor:concentration} generalises the concentration bound for Lipschitz functions in strong Rayleigh distributions.
However, \SLC{} is \emph{not} a negative correlation property.
We construct examples in \Cref{sec:SCP} to show that \SCP{} and \SLC{} are in fact incomparable.
Thus, \Cref{cor:concentration} is incomparable to the results of \cite{pemantle_peres_2014,HS19,GV18}.
It is not clear whether there is a larger class of distributions, generalising both \NR{} and \SLC, which retains this concentration bound. 

It is an interesting open problem to extend our result to more general settings.
\SLC{} distributions are special cases of high-dimensional expanders, where all local spectral gaps are at least $1$.
For more general cases, ``local-to-global'' bounds for spectral gaps have been obtained \citep{KO18,AL20}, whereas local-to-global mLSI on high-dimensional expanders is still elusive.
Another interesting setting is the uniform distribution over common bases or independent sets of two matroids.
Is there a Markov chain that converges rapidly to such distributions?
Note that this setting includes the important problem of sampling perfect matchings of bipartite graphs,
where the only known efficient algorithm is through an annealing process and its running time is a polynomial with high exponent \citep{JSV04}.

In \Cref{sec:prelim} we introduce necessary notions and briefly review relevant background.
In \Cref{sec:main} we formally state our main results.
In \Cref{sec:down-up} we show the decay of relative entropy and modified log-Sobolev constant lower bounds for the ``down-up'' and ``up-down'' walks.
In \Cref{sec:concentration} we show the concentration bound.
In \Cref{sec:SCP} we discuss stochastic covering property and strong log-concavity.

\section{Preliminaries}\label{sec:prelim}

In this section we define and give some basic properties of Markov chains, strongly log-concave distributions, and matroids.

\subsection{Markov chains}

Let $\Omega$ be a discrete state space and $\pi$ be a distribution over $\Omega$.
Let $P:\Omega\times\Omega\rightarrow \=R_{\ge0}$ be the transition matrix of a Markov chain whose stationary distribution is $\pi$.
Then, $\sum_{y\in\Omega}P(x,y)=1$ for any $x\in\Omega$.
We say $P$ is \emph{reversible} with respect to $\pi$ if 
\begin{align}\label{eqn:detailed-balance}
  \pi(x)P(x,y)=\pi(y)P(y,x).
\end{align}
We adopt the standard notation of $\Ex_{\pi}$ for a function $f:\Omega\rightarrow\=R$, namely
\begin{align*}
  \Ex_{\pi}f=\sum_{x\in\Omega}\pi(x)f(x).
\end{align*}

We also view the transition matrix $P$ as an operator that maps functions to functions.
More precisely, let $f$ be a function $f:\Omega\rightarrow \mathbb{R}$ and $P$ acting on $f$ is defined as
\begin{align*}
  Pf(x)\defeq\sum_{y\in\Omega}P(x,y)f(y).
\end{align*}
This is also called the \emph{Markov operator} corresponding to $P$.
We will not distinguish the matrix $P$ from the operator $P$ as it will be clear from the context.
Note that $Pf(x)$ is the expectation of $f$ with respect to the distribution $P(x,\cdot)$.
We can regard a function $f$ as a column vector in $\mathbb{R}^{\Omega}$,
in which case $Pf$ is simply matrix multiplication.

The Hilbert space $L_2(\pi)$ is given by endowing $\mathbb{R}^{\Omega}$ with the inner product
\begin{align}\notag
  \inner{\pi}{f,g}\defeq\sum_{x\in\Omega}\pi(x)f(x)g(x), 
\end{align}
where $f,g\in\mathbb{R}^{\Omega}$.
In particular, the norm in $L_2(\pi)$ is given by $\norm{\pi}{f} \defeq \sqrt{\inner{\pi}{f,f}}$.

The adjoint operator $P^*$ of $P$ is defined as $P^*(x,y)=\frac{\pi(y)P(y,x)}{\pi(x)}$.
This is the (unique) operator which satisfies $\inner{\pi}{f,Pg} = \inner{\pi}{P^*f,g}$.
It is easy to verify that if $P$ satisfies the detailed balanced condition \eqref{eqn:detailed-balance} (so $P$ is reversible),
then $P$ is \emph{self-adjoint}, namely $P=P^*$.

The \emph{Dirichlet form} is defined as:
\begin{align}  \label{eqn:Dirichlet}
  \Diri{P}{f,g}\defeq\inner{\pi}{(\identity-P)f,g},
\end{align}
where $\identity$ stands for the identity matrix of the appropriate size.
Let the \emph{Laplacian} $\+L\defeq \identity-P$.
Then, 
\begin{align}
  \Diri{P}{f,g} & = \sum_{x,y\in\Omega}\pi(x)g(x)\+L(x,y)f(y) \notag\\
  & = \transpose{g}\diag(\pi)\+Lf, \notag
\end{align}
where in the last line we regard $f$, $g$, and $\pi$ as (column) vectors over $\Omega$.
In particular, if $P$ is reversible, then $\+L^*=\+L$ and 
\begin{align}
  \Diri{P}{f,g} & = \inner{\pi}{\+Lf,g} =  \inner{\pi}{f,\+L^*g} = \inner{\pi}{f,\+Lg} = \Diri{P}{g,f} \notag\\
  & = \transpose{f}\diag(\pi)\+Lg\label{eqn:Dirichlet-alt}.
\end{align}
In this paper all Markov chains are reversible and we will most commonly use the form \eqref{eqn:Dirichlet-alt}.
Another common expression of the Dirichlet form for reversible $P$ is
\begin{align}\label{eqn:Dirichlet-reversible}
  \Diri{P}{f,g} & = \frac{1}{2}\sum_{x\in\Omega}\sum_{y\in\Omega}\pi(x)P(x,y)(f(x)-f(y))(g(x)-g(y)),
\end{align}
but we will not need this expression until \Cref{sec:concentration}.
It is well known that the spectral gap of $P$, or equivalently the smallest positive eigenvalue of $\+L$,
controls the convergence rate of $P$.
It also has a variational characterisation.
Let the variance of $f$ be
\begin{align}\notag
  \Var{\pi}{f}\defeq\Ex_{\pi}f^2-\left(\Ex_{\pi} f\right)^2.
\end{align}
Then
\begin{align}\notag
  \lambda(P)\defeq\inf\left\{\frac{\Diri{P}{f,f}}{\Var{\pi}{f}}\mid\; f:\Omega\rightarrow \=R\;,\;\Var{\pi}{f}\neq 0\right\}.
\end{align}
The usefulness of $\lambda(P)$ is due to the fact that, if, say, all eigenvalues of $P$ are non-negative, then
\begin{align}\label{eqn:Poincare-mixing}
  \mixingtime{P,\eps}\le \frac{1}{\lambda(P)}\left(\frac{1}{2} \log\frac{1}{\pimin}+\log\frac{1}{2\eps} \right),
\end{align}
where $\pimin=\min_{x\in\Omega}\pi(x)$.
See, for example, \citet[Theorem 12.4]{LP17}.

The (standard) log-Sobolev inequality relates $\Diri{P}{\sqrt{f},\sqrt{f}}$ with the following entropy-like quantity:
\begin{align} \label{eqn:entropy}
  \Ent{\pi}{f}\defeq\Ex_{\pi}(f\log f)-\Ex_{\pi}f\log \Ex_{\pi}f,
\end{align}
for a non-negative function $f$, where we follow the convention that $0\log 0=0$.
Also, $\log$ always stands for the natural logarithm in this paper.
The log-Sobolev constant is defined as 
\begin{align}\notag
  \alpha(P)\defeq\inf\left\{\frac{\Diri{P}{\sqrt{f},\sqrt{f}}}{\Ent{\pi}{f}}\mid\; f:\Omega\rightarrow \=R_{\ge 0}\;,\;\Ent{\pi}{f}\neq 0\right\}.
\end{align}
The constant $\alpha(P)$ gives a better control of the mixing time of $P$,
as shown by \cite{DS96},
\begin{align}\label{eqn:log-Sob-mixing}
  \mixingtime{P,\eps}\le \frac{1}{4\alpha(P)}\left(\log\log\frac{1}{\pimin}+\log\frac{1}{2\eps^2} \right).
\end{align}
The saving seems modest comparing to \eqref{eqn:Poincare-mixing},
but it is quite common that $\pimin$ is exponentially small in the instance size,
in which case the saving is a polynomial factor.

What we are interested in, however, is the following modified log-Sobolev constant introduced by \cite{BT06}:
\begin{align}\notag
  \rho(P)\defeq\inf\left\{\frac{\Diri{P}{f,\log f}}{\Ent{\pi}{f}}\mid\; f:\Omega\rightarrow \=R_{\ge 0}\;,\;\Ent{\pi}{f}\neq 0\right\}.
\end{align}
Similar to \eqref{eqn:log-Sob-mixing}, we have that
\begin{align}\label{eqn:mod-log-Sob-mixing}
  \mixingtime{P,\eps}\le \frac{1}{\rho(P)}\left(\log\log\frac{1}{\pimin}+\log\frac{1}{2\eps^2} \right),
\end{align}
as shown by \citet[Corollary 2.8]{BT06}. 

For reversible $P$, the following relationships among these constants are known,
\begin{align*}
  2\lambda(P)\ge\rho(P)\ge4\alpha(P).
\end{align*}
See, for example, \citet[Proposition 3.6]{BT06}.

Thus, lower bounds on these constants are increasingly difficult to obtain.
However, to get the best asymptotic control of the mixing time, 
one only needs to lower bound the modified log-Sobolev constant $\rho(P)$ instead of $\alpha(P)$
by comparing \eqref{eqn:log-Sob-mixing} and \eqref{eqn:mod-log-Sob-mixing}.
Indeed, as observed by \cite{HS19}, by taking the indicator function $\frac{1}{\pi(x)}\mathbbm{1}_x$ for all $x\in\Omega$,
\begin{align*}
  \alpha(P)\le\min_{x\in\Omega}\left\{ \frac{1}{-\log\pi(x)}\right\}.
\end{align*}
In our setting of $r$-homogeneous strongly log-concave distributions, 
we cannot hope for a uniform bound for $\alpha(P)$ similar to \Cref{thm:main},
as the right hand side of the above can be arbitrarily small for fixed $r$.

By \eqref{eqn:Dirichlet} and \eqref{eqn:entropy},
it is clear that if we replace $f$ by $cf$ for some constant $c>0$,
then both $\Diri{P}{f,\log f}$ and $\Ent{\pi}{f}$ increase by the same factor $c$.
Thus, in order to bound $\rho$, we may further assume that $\Ex_{\pi}f=1$. 
This assumption allows the simplification $\Ent{\pi}{f}=\Ex_{\pi}(f\log f)$.
In this case, $\pi(\cdot) f(\cdot)$ is a distribution,
and $\Ent{\pi}{f}$ is the relative entropy (or Kullback--Leibler divergence) between $\pi(\cdot) f(\cdot)$ and $\pi(\cdot)$.

\subsection{Strongly log-concave distributions}

We write $\partial_i$ as shorthand for $\frac{\partial}{\partial x_i}$,
and $\partial_I$ for an index set $I=\{i_1,\dots,i_k\}$ as shorthand for  $\partial_{i_1}\dots\partial_{i_k}$.

\begin{definition}\label{def:str-log-concave}
  A polynomial $p \in \=R[x_1,\dots,x_n]$ with non-negative coefficients is \emph{log-concave} at $\*x\in\=R_{\ge0}$ if its Hessian $\nabla^2\log p$ is negative semi-definite at $\*x$.
  We call $p$ \emph{strongly log-concave} if for any index set $I\subseteq[n]$, $\partial_I p$ is log-concave at the all-$1$ vector $\*1$.
\end{definition}

The notion of strong log-concavity was introduced by \cite{Gur09a,Gur09b}.
There are also notions of \emph{complete log-concavity} introduced by \cite{AOV18},
and \emph{Lorentzian} polynomials introduced by \cite{BH19}.
It turns out that for homogeneous polynomials the three notions are equivalent \citep[Theorem 5.3]{BH19}. 
(See also \citealp{ALOV19}.)

The following property of strongly log-concave polynomials is particularly useful \citep{AOV18,BH19}.
\begin{proposition}\label{prop:one-positive-eigenvalue}
  If $p$ is strongly log-concave,
  then for any $I\subseteq[n]$, the Hessian matrix $\nabla^2 \partial_I p(\*1)$ has at most one positive eigenvalue.
\end{proposition}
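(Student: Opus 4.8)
The plan is to reduce the proposition to one elementary fact of linear algebra: a real symmetric matrix that is dominated, in the Loewner order, by a positive semi-definite matrix of rank at most one can itself have at most one positive eigenvalue. Everything else is a one-line computation with the Hessian of a logarithm, so the bulk of the work is just assembling these pieces and handling a degenerate case.

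Concretely, I would fix an index set $I\subseteq[n]$ and set $q\defeq\partial_I p$, noting that $q$ inherits non-negative coefficients from $p$, so that $q(\*1)$ is exactly the sum of the coefficients of $q$. If $q(\*1)=0$, then every coefficient of $q$ vanishes, hence $q\equiv 0$ and $\nabla^2 q(\*1)=0$ has no positive eigenvalue; so I may assume $q(\*1)>0$. In that case $\log q$ is defined near $\*1$ and one has the standard identity
\begin{align*}
  \nabla^2\log q \;=\; \frac{1}{q}\,\nabla^2 q \;-\; \frac{1}{q^2}\,(\nabla q)\transpose{(\nabla q)},
\end{align*}
which comes from $\partial_i\log q=\partial_i q/q$ differentiated once more. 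Evaluating at $\*1$ and invoking the hypothesis that $p$ is strongly log-concave — equivalently, that $q=\partial_I p$ is log-concave at $\*1$, so $\nabla^2\log q(\*1)\preceq 0$ — and clearing the positive factor $q(\*1)$ gives
\begin{align*}
  q(\*1)\,\nabla^2 q(\*1) \;\preceq\; (\nabla q(\*1))\transpose{(\nabla q(\*1))}.
\end{align*}
Thus $\nabla^2 q(\*1)$ is dominated by a positive semi-definite matrix of rank at most one.

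To finish I would establish the linear-algebra lemma: if a symmetric $M\in\mathbb{R}^{n\times n}$ satisfies $M\preceq v\transpose{v}$ for some vector $v$, then $M$ has at most one positive eigenvalue. Let $W$ be the orthogonal complement of $v$, a subspace of dimension at least $n-1$; for every $w\in W$ one has $\transpose{w}Mw\le\transpose{w}(v\transpose{v})w=0$, so $M$ restricted to $W$ is negative semi-definite. By the Courant--Fischer min-max characterisation, the second-largest eigenvalue of $M$ is at most $\max_{0\ne w\in W}\transpose{w}Mw/\transpose{w}w\le 0$, leaving at most the largest eigenvalue positive. Applying this with $M=\nabla^2 q(\*1)$ and $v=\nabla q(\*1)$ yields the proposition.

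I do not expect a genuine obstacle here. The two points that merit a little care are the degenerate case $q(\*1)=0$ (which is why the non-negativity of the coefficients is needed), and confirming that the convention for ``log-concave at $\*1$'' in \Cref{def:str-log-concave} is precisely what licenses passing from $\nabla^2\log q(\*1)\preceq 0$ to the Loewner inequality for $\nabla^2 q(\*1)$ above.
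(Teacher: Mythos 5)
Your argument is correct and is exactly the standard one: the paper does not prove this proposition itself (it cites \citealp{AOV18} and \citealp{BH19}), and the remark immediately following it records precisely the equivalence you establish, namely that $\nabla^2\log \partial_I p(\*1)\preceq 0$ forces $\nabla^2\partial_I p(\*1)$ to be dominated by a rank-one positive semi-definite matrix and hence to have at most one positive eigenvalue. Your treatment of the degenerate case $\partial_I p(\*1)=0$ via non-negativity of the coefficients, and the Courant--Fischer step on the orthogonal complement of $\nabla \partial_I p(\*1)$, are both sound.
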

In fact, when p is homogeneous, $\nabla^2 \partial_I p(\*1)$ having at most one positive eigenvalue is equivalent to $\nabla^2\log \partial_I p(\*1)$ being negative semi-definite \citep{AOV18}, but we will only need the proposition above.

A distribution $\pi$ is called \emph{$r$-homogeneous} (or \emph{strongly log-concave}) if $g_{\pi}$ is.

\subsection{Matroids}

A matroid is a combinatorial structure that abstracts the notion of linear independence.
We shall define it in terms of its independent sets, although many different equivalent definitions exist.
Formally, a matroid $\+M=(E,\+I)$ consists of a finite ground set $E$ and a collection $\+I$ of subsets of $E$ (independent sets) that satisfy the following:
\begin{itemize}
  \item $\emptyset\in\+I$;
  \item if $S\in\+I$, $T\subseteq S$, then $T\in\+I$;
  \item if $S,T\in\+I$ and $\abs{S}>\abs{T}$, then there exists an element $i\in S\setminus T$ such that $T\cup\{i\}\in\+I$.
\end{itemize}
The first condition guarantees that $\+I$ is non-empty, the second implies that $\+I$ is downward closed, and the third is usually called the augmentation axiom.
We direct the reader to \cite{Oxl92} for a reference book on matroid theory.
In particular, the augmentation axiom implies that all the maximal independent sets have the same cardinality, namely the rank $r$ of $\+M$.
The set of bases $\+B$ is the collection of maximal independent sets of $\+M$.
Furthermore, we denote by $\+M(k)$ the collection of independent sets of size $k$, where $0\le k\le r$.
If we dropped the augmentation axiom, the resulting structure would be a non-empty collection of subsets of $E$ that is downward closed, known as an (abstract) \emph{simplicial complex}.

\citet[Theorem 7.1]{BH19} showed that the support of an $r$-homogeneous strongly log-concave distribution $\pi$ is the set of bases of a matroid $\+M=(E,\+I)$ of rank $r$.
We equip $\+I$ with a weight function $w(\cdot)$ recursively defined as follows:\footnote{One may define $w(I)$ to be a $\frac{k!}{r!}$ fraction of the current definition for $I\in\+M(k)$. This alternative definition will eliminate many factorial factors in the rest of the paper. However, it is inconsistent with the literature \citep{ALOV19,KO18}, so we do not adopt it.}
\begin{align*}
  w(I)\defeq 
  \begin{cases}
    \pi(I)Z_r & \text{if $\abs{I}=r$},\\
    \sum_{I'\supset I,\; \abs{I'}=\abs{I}+1} w(I') & \text{if $\abs{I}<r$},
  \end{cases}  
\end{align*}
for some normalisation constant $Z_r>0$.
For example, we may choose $w(B)=1$ for all $B\in\+B$ and $Z_r=\abs{\+B}$,
which corresponds to the uniform distribution over $\+B$.
It follows that 
\begin{align}\notag
  w(I) = (r-\abs{I})!\sum_{B\in\+B,\;I\subseteq B}w(B).
\end{align}
Let $\pi_k$ be the distribution over $\+M(k)$ such that $\pi_k(I)\propto w(I)$ for $I\in\+M(k)$.
Thus $\pi=\pi_r$.
For any $I\in\+M(k)$, 
$\pi_k(I)$ is proportional to the probability of generating a superset of~$I$ under $\pi$.
Let $Z_k=\sum_{I\in\+M(k)}w(I)$ be the normalisation constant of $\pi_k$.
In fact, for any $0\le k\le r$, $k!Z_k=Z_0=w(\emptyset)$.

It is straightforward to verify that for any $I\in\+I$,
\begin{align}\label{eqn:partial-I}
  \partial_Ig_{\pi}(\*1) = \sum_{B\in\+B,I\subset B}\pi(B) = \frac{1}{Z_r}\sum_{B\in\+B,I\subset B}w(B).
\end{align}
We also write $w(v)$ as shorthand for $w(\{v\})$ for any $v\in E$.

For an independent set $I\in\+I$, the contraction $\+M_I=(E\setminus I, \+I_I)$ is also a matroid, 
where $\+I_I=\{J\mid J\subseteq E\setminus I, J\cup I\in\+I\}$.
We equip $\+M_I$ with a weight function $w_I(\cdot)$ such that $w_I(J)=w(I\cup J)$.
We may similarly define distributions $\pi_{I,k}$ for $k\le r-\abs{I}$ such that $\pi_{I,k}(J)\propto w_I(J)$ for $J\in\+M_I(k)$.
For convenience, instead of defining $\pi_{I,k}$ over $\+M_I(k)$,
we define it over $\+M(k+\abs{I})$ such that for any $J\in\+M(k+\abs{I})$,
\begin{align} \label{eqn:pi-I-k}
  \pi_{I,k}(J)\defeq
  \begin{cases}
    \frac{k!w(J)}{w(I)} & \text{if $I\subset J$;}\\
    0 & \text{otherwise.}
  \end{cases}
\end{align}
Notice that the normalising constant $Z_{I,k}=\frac{w(I)}{k!}$.

If $\abs{I}\le r-2$, let $W_I$ be the matrix such that $W_{uv}=w_I(\{u,v\})$ for any $u,v\in E\setminus I$.
Then, notice that
\begin{align*}
  w_I(\{u,v\}) & = w(I\cup\{u,v\}) \\
  & = (r-\abs{I}-2)!\sum_{B\in\+B,\;I\cup\{u,v\}\subseteq B}w(B) \\
  & = (r-\abs{I}-2)!Z_{r}\cdot \partial_u\partial_v\partial_I g_{\pi} (\*1). \tag{by \eqref{eqn:partial-I}}
\end{align*}
In other words, $W_I$ is $\nabla^2\partial_I g_{\pi}$ multiplied by the scalar $(r-\abs{I}-2)!Z_{r}$.
Thus, \Cref{prop:one-positive-eigenvalue} implies the following.

\begin{proposition}\label{prop:one-eigenvalue}
  Let $\pi$ be an $r$-homogeneous strongly log-concave distribution over $\+M=(E,\+I)$.
  If $I\in\+I$ and $\abs{I}\le r-2$,
  then the matrix $W_I$ has at most one positive eigenvalue.
\end{proposition}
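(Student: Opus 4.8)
The plan is to read this off from \Cref{prop:one-positive-eigenvalue} together with the identity worked out just before the statement. First I would recall that, since $\pi$ is $r$-homogeneous and strongly log-concave, its generating polynomial $g_{\pi}$ is a strongly log-concave polynomial in the sense of \Cref{def:str-log-concave}. Applying \Cref{prop:one-positive-eigenvalue} to $p=g_{\pi}$ and to the given index set $I$ (here $I\subseteq E\subseteq[n]$), the Hessian $\nabla^2\partial_I g_{\pi}(\*1)$, viewed as a symmetric matrix indexed by $E\setminus I$, has at most one positive eigenvalue.

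Next I would invoke the computation preceding the statement, which expresses each entry $W_{uv}=w_I(\{u,v\})=w(I\cup\{u,v\})$ as $(r-\abs{I}-2)!\,Z_r\cdot\partial_u\partial_v\partial_I g_{\pi}(\*1)$ via \eqref{eqn:partial-I}, so that
\begin{align*}
  W_I=(r-\abs{I}-2)!\,Z_r\cdot\nabla^2\partial_I g_{\pi}(\*1)
\end{align*}
as matrices over $E\setminus I$ (with the diagonal convention matching the multilinearity of $g_{\pi}$, under which $\partial_u^2\partial_I g_{\pi}=0$). Since $\abs{I}\le r-2$ and $Z_r>0$, the scalar $(r-\abs{I}-2)!\,Z_r$ is strictly positive. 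Multiplying a symmetric matrix by a positive scalar merely rescales all its eigenvalues by that factor (equivalently, it is a congruence, so Sylvester's law of inertia applies), hence it preserves the number of positive eigenvalues. Therefore $W_I$ has at most one positive eigenvalue.

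There is essentially no substantive obstacle here: the proposition is a direct corollary of \Cref{prop:one-positive-eigenvalue} once the scaling identity is in place. The only points that warrant a line of verification are that $\partial_I g_{\pi}$ is the relevant object to which \Cref{prop:one-positive-eigenvalue} applies (it is, since strong log-concavity is a statement about \emph{all} partial derivatives), that the scalar $(r-\abs{I}-2)!\,Z_r$ is indeed positive for $I\in\+I$ with $\abs{I}\le r-2$, and that the definition of the entries of $W_I$ agrees with the Hessian of $\partial_I g_{\pi}$ at $\*1$ up to this scalar — all of which follow immediately from the recursive definition of $w$ and from \eqref{eqn:partial-I}.
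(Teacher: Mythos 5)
Your proposal is correct and follows exactly the paper's own route: identify $W_I$ as the positive scalar multiple $(r-\abs{I}-2)!\,Z_r$ of $\nabla^2\partial_I g_{\pi}(\*1)$ via \eqref{eqn:partial-I}, then invoke \Cref{prop:one-positive-eigenvalue} and the fact that positive scaling preserves the signature. Your explicit remark about the diagonal convention ($\partial_u^2\partial_I g_{\pi}=0$ by multilinearity) is a point the paper glosses over, and is worth having.
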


\Cref{prop:one-eigenvalue} implies the following bound for a quadratic form,
which will be useful later.

\begin{lemma}  \label{lem:quadratic-bound}
  Let $\pi$ be an $r$-homogeneous strongly log-concave distribution over $\+M=(E,\+I)$,
  and let $I\in\+I$ such that $\abs{I}\le r-2$.  Let $f:\+M_I(1)\rightarrow\=R$ be a function 
  such that $\Ex_{\pi_{I,1}}f=1$.  Then
  \begin{align*}
    \transpose{f}W_If\le w(I).
  \end{align*}
\end{lemma}
\begin{proof}
  Let $\*w_I=\{w_I(v)\}_{v\in E\setminus I}$.
  The constraint $\Ex_{\pi_{I,1}}f=1$ implies that $\sum_{v\in E\setminus I}w_I(v)f(v)=w(I)$.
  Let $D=\diag(\*w_I)$ and $A=D^{-1/2}W_ID^{-1/2}$.
  Then $A$ is a real symmetric matrix.
  By \Cref{prop:one-eigenvalue},
  $W_I$ has at most one positive eigenvalue, and thus so does $A$ \citep[see, e.g.,][Lemma 2.4]{ALOV19}.
  We may decompose $A$ as 
  \begin{align}\label{eqn:decompose-A}
    A =\sum_{i=1}^{\abs{E\setminus I}} \lambda_i g_i\transpose{g_i},
  \end{align}
  where $\{g_i\}$ is an orthonormal basis and $\lambda_i\le 0$ for all $i\ge 2$.
  Moreover, notice that $\sqrt{\*w_I}$ is an eigenvector of $A$ with eigenvalue $1$.
  Thus, $\lambda_1=1$ and $g_1$ can be taken as $\sqrt{\pi_{I,1}}$.

  The decomposition \eqref{eqn:decompose-A} directly implies that
  \begin{align*}
    W = \sum_{i=1}^{\abs{E\setminus I}} \lambda_i h_i\transpose{h_i},
  \end{align*}
  where $h_i=D^{1/2}g_i$. 
  In particular, $h_1=\frac{1}{\sqrt{w(I)}}\*w_I$.

  The assumption $\sum_{v\in E\setminus I}w_I(v)f(v)=w(I)$ can be rewritten as 
  $\inner{}{h_1,f}=\sqrt{w(I)}$.
  Thus,
  \begin{align*}
    \transpose{f}W_If
    & = \sum_{i=1}^{\abs{E\setminus I}} \lambda_i \inner{}{h_i,f}^2 \le \inner{}{h_1,f}^2 = w(I),
  \end{align*}
  where the inequality is due to the fact that $\lambda_1=1$ and $\lambda_i\le 0$ for all $i\ge 2$.
  The lemma follows.
\end{proof}

\section{Main results}\label{sec:main}

There are two natural random walks $\RWup{k}$ and $\RWdown{k}$ on $\+M(k)$ by starting with adding or deleting an element and coming back to $\+M(k)$.
Given the current $I\in\+M(k)$, 
the ``up-down'' random walk $\RWup{k}$ first chooses $I'\in\+M(k+1)$ such that $I'\supset I$ with probability proportional to $w(I')$,
and then removes one element from $I'$ uniformly at random.
More formally, for $1\le k\le r-1$ and $I,J\in\+M(k)$, we have that
\begin{align}  \label{eqn:RWup}
  \RWup{k}(I,J)=  
  \begin{cases}
    \frac{1}{k+1} & \text{if $I=J$};\\
    \frac{w(I\cup J)}{(k+1)w(I)} & \text{if $I\cup J\in\+M(k+1)$};\\
    0 & \text{otherwise.}
  \end{cases}
\end{align}
The ``down-up'' random walk $\RWdown{k}$ removes an element of $I$ uniformly at random to get $I'\in\+M(k-1)$,
and then moves to $J$ such that $J\in\+M(k),J\supset I'$ with probability proportional to $w(J)$.
More formally, for $2\le k\le r$,
\begin{align}  \label{eqn:RWdown}
  \RWdown{k}(I,J)=  
  \begin{cases}
    \sum_{I'\in\+M(k-1),I'\subset I}\frac{w(I)}{kw(I')} & \text{if $I=J$};\\
    \frac{w(J)}{kw(I\cap J)} & \text{if $\abs{I\cap J}=k-1$};\\
    0 & \text{if $\abs{I\cap J}<k-1$}.
  \end{cases}
\end{align}
Thus, the bases-exchange walk $\RWBS$ according to $\pi$ is just $\RWdown{r}$.
The stationary distribution of both $\RWup{k}$ and $\RWdown{k}$ is $\pi_k(I) = \frac{w(I)}{Z_k}=\frac{k! w(I)}{r!Z_r}$.

\begin{theorem}  \label{thm:main-complete}
  Let $\pi$ be an $r$-homogeneous strongly log-concave distribution,
  and $\+M$ the associated matroid.
  Let $\RWdown{k}$ and $\RWup{k}$ be defined as above on $\+M(k)$.
  Then the following hold:
  \begin{itemize}
    \item for any $2\le k\le r$, $\rho(\RWdown{k})\ge\frac{1}{k};$
    \item for any $1\le k\le r-1$, $\rho(\RWup{k})\ge\frac{1}{k+1}$.
  \end{itemize}
\end{theorem}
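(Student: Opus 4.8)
The plan is to derive both bounds from a single entropy-contraction estimate along the levels of $\+M$, combined with Jensen's inequality. First I would fix the operator picture. Let $\Pdown{k}$ be the ``down'' operator that sends $I\in\+M(k)$ to a uniformly random $(k-1)$-subset, and $\Pup{k-1}$ the ``up'' operator that sends $I'\in\+M(k-1)$ to $I'\cup\{v\}$ with probability proportional to $w(I'\cup\{v\})$; then $\RWdown{k}=\Pdown{k}\Pup{k-1}$, $\RWup{k}=\Pup{k}\Pdown{k+1}$, and $\Pdown{k}$, $\Pup{k-1}$ are mutually adjoint in the relevant weighted inner products (this uses $k!\,Z_k=Z_0$). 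The key feature is that $\Pup{k-1}$ is an averaging operator, $(\Pup{k-1}h)(I')=\Ex_{\pi_{I',1}}h$. Hence, for a density $f$ on $\+M(k)$ normalised by $\Ex_{\pi_k}f=1$, concavity of $\log$ gives the pointwise bound $\Pup{k-1}\log f\le\log(\Pup{k-1}f)$, and using the adjunction to rewrite $\inner{\pi_k}{\RWdown{k}f,\log f}=\inner{\pi_{k-1}}{\Pup{k-1}f,\Pup{k-1}\log f}$ we obtain
\begin{align*}
  \Diri{\RWdown{k}}{f,\log f} \ge \Ent{\pi_k}{f}-\Ent{\pi_{k-1}}{\Pup{k-1}f}.
\end{align*}
So $\rho(\RWdown{k})\ge\frac1k$ reduces to the contraction $\Ent{\pi_{k-1}}{\Pup{k-1}f}\le\bigl(1-\frac1k\bigr)\Ent{\pi_k}{f}$, and the same manipulation for $\RWup{k}$ (now using that $\Pdown{k+1}$ is also an averaging operator) reduces $\rho(\RWup{k})\ge\frac1{k+1}$ to the lifting contraction $\Ent{\pi_{k+1}}{\Pdown{k+1}f}\le\frac{k}{k+1}\Ent{\pi_k}{f}$.

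The contraction is the core of the argument, and I would prove it by induction on $k$. The chain rule for relative entropy gives the exact decomposition
\begin{align*}
  \Ent{\pi_k}{f}=\Ent{\pi_{k-1}}{\Pup{k-1}f}+\Ex_{I'\sim\pi_{k-1}}\bigl[(\Pup{k-1}f)(I')\cdot\Ent{\pi_{I',1}}{f_{I'}}\bigr],
\end{align*}
where $f_{I'}$ is the conditional density of $f$ on the fibre $\{I\in\+M(k):I\supset I'\}$; write $\+D_k(f)$ for the second term, the ``entropy drop'' from level $k$ to $k-1$. Telescoping this identity down to the empty set gives $\Ent{\pi_k}{f}=\sum_{j=1}^{k}\+D_j(f^{(j)})$, where $f^{(j)}$ denotes the pushforward of $f$ to $\+M(j)$. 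It therefore suffices to show that the drops are non-increasing down the filtration, $\+D_{k-1}(\Pup{k-1}f)\le\+D_k(f)$: this makes $\+D_k(f)$ the largest summand, so $\Ent{\pi_k}{f}\le k\,\+D_k(f)$, which is exactly the desired contraction. To prove $\+D_{k-1}(\Pup{k-1}f)\le\+D_k(f)$ I would condition on a single ground-set element $v$: deleting $v$ identifies the part of $\+M(k)$ through $v$ with the level-$(k-1)$ slice of the contraction $\+M_v$, and under this identification the up/down operators and the weights $w_v(\cdot)=w(\{v\}\cup\cdot)$ restrict consistently, so that $\+D_k(f)$ and $\+D_{k-1}(\Pup{k-1}f)$ both split as averages over $v$ of the corresponding entropy drops inside $\+M_v$ at levels $k-1$ and $k-2$. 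The inequality then follows termwise from the inductive hypothesis applied to $\+M_v$ at level $k-1$.

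The induction bottoms out at $k=2$, where one must prove $\Ent{\pi_1}{\Pup{1}f}\le\frac12\Ent{\pi_2}{f}$ for every density $f$ on $\+M(2)$ (equivalently $\+D_1(\Pup{1}f)\le\+D_2(f)$; and, after the peeling, the same statement for every contraction $\+M_I$ with $|I|\le r-2$). This is the single place where strong log-concavity is indispensable, and I expect it to be the main obstacle. The relevant local object is the symmetric pair-weight matrix $W_I$ with entries $w_I(\{u,v\})$, which by \Cref{prop:one-eigenvalue} has at most one positive eigenvalue; \Cref{lem:quadratic-bound} already packages this into the estimate $\transpose{g}W_I g\le w(I)$ whenever $\Ex_{\pi_{I,1}}g=1$. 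The task is to turn this spectral control of the pair-weights into the entropy inequality — concretely, to bound the local entropy term $\+D_2(f)$ from below by $\Ent{\pi_1}{\Pup{1}f}$. I would do this by passing from relative entropy to an appropriate quadratic functional on the pairs and then invoking the one-positive-eigenvalue bound; the constant $\frac12$ should emerge because the $\pi_{I,1}$-normalised version of $W_I$ has top eigenvalue $1$ (with eigenvector $\sqrt{\pi_{I,1}}$) and every remaining eigenvalue non-positive.

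Combining the contraction with the first display gives $\Diri{\RWdown{k}}{f,\log f}\ge\frac1k\Ent{\pi_k}{f}$ for all densities $f$, hence $\rho(\RWdown{k})\ge\frac1k$ for $2\le k\le r$. The up-down walk is treated by the parallel argument of \Cref{sec:up-down}: the Jensen reduction now leaves the lifting contraction $\Ent{\pi_{k+1}}{\Pdown{k+1}f}\le\frac{k}{k+1}\Ent{\pi_k}{f}$, which I would prove by the same inductive decomposition — the chain rule for the averaging operator $\Pdown{k+1}$, whose fibres over $I\in\+M(k+1)$ are the $k$-element subsets of $I$, peeled down to the rank-$2$ base case. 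This yields $\Diri{\RWup{k}}{f,\log f}\ge\frac1{k+1}\Ent{\pi_k}{f}$, i.e.\ $\rho(\RWup{k})\ge\frac1{k+1}$ for $1\le k\le r-1$.
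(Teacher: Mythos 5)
Your treatment of the down--up walk is correct and is essentially the paper's own argument, lightly repackaged. The Jensen/adjointness reduction $\Diri{\RWdown{k}}{f,\log f}\ge\Ent{\pi_k}{f}-\Ent{\pi_{k-1}}{\Pup{k-1}f}$ is exactly the proof of \Cref{cor:log-sob-down-up}; your ``entropy drops are non-increasing'' induction is equivalent to \Cref{lem:entropy-contraction} (the splitting of the drop over a single ground-set element $v$ is the mixture decomposition \eqref{eqn:decomp}, the termwise step inside the contraction $\+M_v$ is \eqref{eqn:entropy-IH}, and your telescoping bound $\Ent{\pi_k}{f}\le k\,\+D_k(f)$ plays the role of \eqref{eqn:entropy-IH-1-k}--\eqref{eqn:sum-k-level-1}); and the base case you defer to is precisely \Cref{lem:k=2}, which passes from entropy to the quadratic form via $a\log\frac{a}{b}\ge a-b$ and then invokes \Cref{lem:quadratic-bound}.

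The up--down half, however, has a genuine gap: the ``lifting contraction'' $\Ent{\pi_{k+1}}{\Pdown{k+1}f}\le\frac{k}{k+1}\Ent{\pi_k}{f}$ to which you reduce the second bullet is \emph{false}. Take $\pi$ uniform over the bases of the uniform matroid $U_{2,n}$ (rank $2$ on $n$ elements, which is \SLC), $k=1$, and $f$ on $\+M(1)$ with $f(\{1\})=n$ and $f=0$ elsewhere, so that $\Ex_{\pi_1}f=1$ and $\Ent{\pi_1}{f}=\log n$. Then $\Pdown{2}f$ equals $n/2$ on the $n-1$ pairs containing element $1$ and $0$ on the remaining pairs, its mean under $\pi_2$ is $1$, and $\Ent{\pi_2}{\Pdown{2}f}=\log(n/2)$. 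For $n\ge 5$ this exceeds $\frac12\log n$, and since $\log(n/2)/\log n\to 1$ the up operator admits \emph{no} uniform entropy-contraction factor in this family, let alone $\frac{k}{k+1}$. This is consistent with \Cref{lem:log-sob-up-down} being true: the Jensen step is simply too lossy in this direction, i.e.\ $\Diri{\RWup{k}}{f,\log f}$ genuinely exceeds $\Ent{\pi_k}{f}-\Ent{\pi_{k+1}}{\Pdown{k+1}f}$, and the paper only ever proves the weaker monotonicity \eqref{eqn:Pdown-entropy} for $\Pdown{}$. Your proposed proof of the lifting contraction also could not have the right shape even formally: the chain rule for $\Pdown{k+1}$ telescopes \emph{upwards} to level $r$ (giving $r-k$ terms, not $k+1$), and its fibres are uniform over the $k$-subsets of a fixed $(k+1)$-set, so they carry no matroid structure and admit no analogue of the $\+M_v$ recursion.

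The paper explicitly flags this asymmetry (``even adapting the proof of \Cref{cor:log-sob-down-up} seems difficult'') and takes a different route for $\RWup{k}$: it decomposes the Laplacian as $\identity-\RWup{k}=\frac{2}{k+1}\sum_{K\in\+M(k-1)}\left(\identity_{S_K}-\RWup{K,1}\right)$, proves the local modified log-Sobolev inequality $\rho(\RWup{K,1})\ge\frac12$ on each link directly from \Cref{lem:quadratic-bound} together with $\log x\le x-1$ (\Cref{lem:log-Sob-links}), and then recombines using the entropy decomposition \eqref{eqn:entropy-decomp} over $K\in\+M(k-1)$ and \Cref{lem:entropy-contraction}, which yields $\sum_K\pi_{k-1}(K)\Ent{\pi_{K,1}}{f^{(k)}}\ge\frac1k\Ent{\pi_k}{f^{(k)}}$. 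You would need to replace your second reduction with an argument of this kind that works with the Dirichlet form of $\RWup{k}$ directly rather than with an entropy difference.
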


\Cref{thm:main-complete} is shown in \Cref{sec:down-up}.
Interestingly, we do not know how to directly relate $\rho(\RWup{k})$ with $\rho(\RWdown{k+1})$,
although it is straightforward to see that both walks have the same spectral gap (see \eqref{eqn:up-decomp} and \eqref{eqn:down-decomp} below).

By \eqref{eqn:mod-log-Sob-mixing}, we have the following corollary.
\begin{corollary}\label{cor:mixing}
  In the same setting as \Cref{thm:main-complete},
  we have that
  \begin{itemize}
    \item for any $2\le k\le r$, $\mixingtime{\RWdown{k},\eps}\le k\left( \log\log\pi_{k,\textnormal{min}}^{-1}+\log \frac{1}{2\eps^2} \right)$;
    \item for any $1\le k\le r-1$, $\mixingtime{\RWup{k},\eps}\le (k+1)\left( \log\log\pi_{k,\textnormal{min}}^{-1}+\log \frac{1}{2\eps^2} \right)$.
  \end{itemize}
  In particular,
  for the bases-exchange walk $\RWBS$ according to $\pi$,
  \begin{align*}
    \mixingtime{\RWBS,\eps}\le r\left( \log\log\pi_{\textnormal{min}}^{-1}+\log \frac{1}{2\eps^2} \right).
  \end{align*}
\end{corollary}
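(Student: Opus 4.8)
The plan is to obtain \Cref{cor:mixing} as an immediate consequence of \Cref{thm:main-complete} together with the general mixing-time bound \eqref{eqn:mod-log-Sob-mixing} of \citet[Corollary 2.8]{BT06}. The only preliminary point to check is that \eqref{eqn:mod-log-Sob-mixing} is applicable, i.e.\ that $\RWdown{k}$ and $\RWup{k}$ are reversible with respect to their stationary distribution $\pi_k$. First I would verify detailed balance directly from \eqref{eqn:RWup} and \eqref{eqn:RWdown}: for $\RWdown{k}$ and $I,J\in\+M(k)$ with $\abs{I\cap J}=k-1$, writing $I'=I\cap J$, we have $\pi_k(I)\RWdown{k}(I,J)=\frac{w(I)w(J)}{Z_k\,k\,w(I')}$, which is symmetric in $I$ and $J$; the diagonal and the zero entries are trivially consistent. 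The analogous computation works for $\RWup{k}$ using $I\cup J\in\+M(k+1)$. Hence both chains are reversible and \eqref{eqn:mod-log-Sob-mixing} applies with $\pi=\pi_k$, so $\pimin$ there reads $\pi_{k,\textnormal{min}}$.

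Next I would simply substitute the bounds of \Cref{thm:main-complete} into \eqref{eqn:mod-log-Sob-mixing}. For $2\le k\le r$, the right-hand side of \eqref{eqn:mod-log-Sob-mixing} is decreasing in $\rho$, so $\rho(\RWdown{k})\ge\frac1k$ gives
\begin{align*}
  \mixingtime{\RWdown{k},\eps}\le\frac{1}{\rho(\RWdown{k})}\left(\log\log\pi_{k,\textnormal{min}}^{-1}+\log\frac{1}{2\eps^2}\right)\le k\left(\log\log\pi_{k,\textnormal{min}}^{-1}+\log\frac{1}{2\eps^2}\right),
\end{align*}
and likewise $\rho(\RWup{k})\ge\frac{1}{k+1}$ for $1\le k\le r-1$ yields the second bullet with the factor $k+1$.

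Finally, for the displayed bound on $\RWBS$, I would recall from \Cref{sec:main} that $\RWBS=\RWdown{r}$ and that its stationary distribution is $\pi_r=\pi$, so $\pi_{r,\textnormal{min}}=\pimin$; taking $k=r$ in the first bullet then gives exactly $\mixingtime{\RWBS,\eps}\le r\left(\log\log\pi_{\textnormal{min}}^{-1}+\log\frac{1}{2\eps^2}\right)$. There is no real obstacle here: the content lies entirely in \Cref{thm:main-complete}, and the only thing one must be slightly careful about is the reversibility check and the bookkeeping of which stationary distribution (and hence which $\pi_{\textnormal{min}}$) is attached to each level $k$.
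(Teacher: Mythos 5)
Your proposal is correct and matches the paper's own derivation: the corollary is obtained exactly by plugging the bounds of \Cref{thm:main-complete} into \eqref{eqn:mod-log-Sob-mixing}, with the reversibility of $\RWdown{k}$ and $\RWup{k}$ with respect to $\pi_k$ checked just as you describe. (The paper additionally supplies an alternative ``direct proof'' of the corollary via \Cref{lem:entropy-contraction} and Pinsker's inequality that bypasses the modified log-Sobolev constant, but that is a supplementary route, not the one the corollary's statement relies on.)
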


Let $\+M$ be a matroid of rank $r$ with a ground set of size $n$.
For the uniform distribution over the bases of $\+M$,
\Cref{cor:mixing} implies that the mixing time of the bases-exchange walk is $O(r(\log r+\log\log n))$,
which improves upon the $O(r^2\log n)$ bound of \cite{ALOV19}.
The mixing time bound in \Cref{cor:mixing} is sharp, 
as there are matroids where the upper bound is achieved \citep[Ex.\ 9.14]{Jer03}.
As mentioned in the introduction, one such example is the graphic matroid defined by a graph 
which is similar to a path but with two parallel edges connecting every two successive vertices instead of a single edge. 
Equivalently, this can be viewed as the partition matroid where each block has two elements 
and each basis is formed by choosing exactly one element from every block. 
The rank of this matroid is $r=n/2$, and $\pimin=\frac{1}{2^{n/2}}$.
The Markov chain $\RWBS$ in this case is just the 1/2-lazy random walk on the $n/2$-dimensional Boolean hypercube,
which has mixing time $\Theta(n\log n)$, matching the upper bound in \Cref{cor:mixing}.

For more details on the concentration result, \Cref{cor:concentration}, see \Cref{sec:concentration}.

\section{Decay of relative entropy}\label{sec:down-up}

In this section and what follows,
we always assume that the matroid $\+M$ and the weight function $w(\cdot)$ correspond to an $r$-homogeneous strongly log-concave distribution $\pi=\pi_r$.

We first give some basic decompositions of $\RWdown{k}$ and $\RWup{k}$.
Let $A_k$ be a matrix whose rows are indexed by $\+M(k)$ and columns by $\+M(k+1)$ such that
\begin{align*}
  A_k(I,J) \defeq 
  \begin{cases}
    1 & \text{if $I\subset J$};\\
    0 & \text{otherwise},
  \end{cases}
\end{align*}
and $\*w_k$ be the vector of $\{w(I)\}_{I\in\+M(k)}$.
Moreover, let
\begin{align}
  \Pup{k} &\defeq \diag(\*w_k)^{-1}A_k\diag(\*w_{k+1}), \label{eqn:Pup}\\
  \Pdown{k+1} &\defeq \frac{1}{k+1}\transpose{A_k}. \label{eqn:Pdown}
\end{align}
Then
\begin{align}
  \RWup{k} & = \Pup{k}\Pdown{k+1}, \label{eqn:up-decomp}\\
  \RWdown{k+1}& = \Pdown{k+1}\Pup{k}.   \label{eqn:down-decomp}
\end{align}
Let $D_k=\diag(\pi_k)$.
Using \eqref{eqn:Pup} and \eqref{eqn:Pdown},
we get that
\begin{align}
  D_{k+1}\Pdown{k+1} = \transpose{(\Pup{k})}D_k.
  \label{eqn:Pup-Pdown}
\end{align}
By multiplying equation \eqref{eqn:Pup-Pdown} by the all-ones vector, we also get that
\begin{align}
  \pi_{k+1}\Pdown{k+1} & =\pi_{k}, \label{eqn:pi-Pdown}\\
  \pi_{k}\Pup{k} & =\pi_{k+1}. \label{eqn:pi-Pup}
\end{align}
For $k\ge 2$ and a function $f^{(k)}:\+M(k)\rightarrow\=R_{\ge 0}$,
define $f^{(i)}:\+M(i)\rightarrow\=R_{\ge 0}$ for $1\le i\le k-1$ such that
\begin{align}  \label{eqn:f-level-i}
  f^{(i)}\defeq\prod_{j=i}^{k-1}\Pup{j} f^{(k)}.
\end{align}
Intuitively, $f^{(i)}$ is the function $f^{(k)}$ ``going down'' to level $i$.
The key lemma, namely \Cref{lem:entropy-contraction}, is that this operation contracts the relative entropy by a factor of $1-\frac{1}{i}$ from level $i$ to level $i-1$.

In fact, recall that if we normalise $\Ex_{\pi_k}f^{(k)}=1$, then $\transpose{\left( f^{(k)} \right)} D_{k}$ is a distribution (viewed as a row vector).
Then, it is easy to verify that 
\begin{align}\label{eqn:left-multiply}
  \transpose{\left( f^{(k-1)} \right)} D_{k-1} = \transpose{\left( f^{(k)} \right)} D_{k} \Pdown{k}.
\end{align}
Namely, the corresponding distribution of $f^{(k-1)}$ is that of $f^{(k)}$ after the random walk $\Pdown{k}$.


We first establish some properties of $f^{(i)}$ for $i<k$.

\begin{lemma}\label{lem:f-k}
  Let $k\ge 2$ and $f^{(k)}:\+M(k)\rightarrow\=R_{\ge 0}$ be a non-negative function on $\+M(k)$.
  Then we have the following:
  \begin{enumerate}
    \item \label{case:f-sup-i} for any $1\le i<k$, $J\in\+M(i)$, $f^{(i)}(J)=\Ex_{\pi_{J,k-i}}f^{(k)};$
    \item \label{case:normalisation-level-i} for any $1\le i\le k$, $\Ex_{\pi_{i}}f^{(i)}=\Ex_{\pi_{k}}f^{(k)}$.
  \end{enumerate}
\end{lemma}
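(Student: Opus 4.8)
The plan is to prove both parts by a downward induction on $i$, starting from $i = k$ where $f^{(k)}$ is given, and peeling off one level at a time using the recursion $f^{(i)} = \Pup{i} f^{(i+1)}$ implicit in \eqref{eqn:f-level-i}.

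For part \eqref{case:f-sup-i}, the base case is really $i = k-1$: here $f^{(k-1)} = \Pup{k-1} f^{(k)}$, so $f^{(k-1)}(J) = \sum_{I \in \+M(k),\, I \supset J} \frac{w(I)}{k\, w(J)} f^{(k)}(I)$ by the definition \eqref{eqn:Pup} of $\Pup{k-1}$ (note $A_{k-1}(J,I) = 1$ iff $J \subset I$). Comparing with the definition \eqref{eqn:pi-I-k} of $\pi_{J,k-i}$, when $i = k-1$ we have $k - i = 1$ and $\pi_{J,1}(I) = \frac{1!\, w(I)}{w(J)} = \frac{w(I)}{w(J)}$ for $I \supset J$; but the sum in the definition of $\Pup{k-1}$ includes the factor $\frac{1}{k} = \frac{1}{|J|+1}$ from $\diag(\*w_{k-1})^{-1} A_{k-1}$ being row-normalised differently — I should check that $\sum_{I \supset J} w(I) = k\, w(J)$ when $|J| = k-1$, which is exactly the weight recursion $w(J) = \sum_{I' \supset J,\, |I'| = k} w(I')$... wait, that recursion gives $w(J) = \sum_{I \supset J} w(I)$ with no factor of $k$. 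So in fact $\Pup{k-1}(J,I) = \frac{w(I)}{k\, w(J)}$ does not sum to $1$ unless — rechecking \eqref{eqn:RWup}, the $\frac{1}{k+1}$ self-loop term shows the normalisation is over $\+M(k+1)$-neighbours plus the diagonal, so I need to track the $\frac{1}{|J|+1}$ factors carefully. The cleanest route is to unwind \eqref{eqn:f-level-i} directly: $f^{(i)}(J) = \sum_{I \in \+M(k)} \bigl(\prod_{j=i}^{k-1}\Pup{j}\bigr)(J,I)\, f^{(k)}(I)$, show by telescoping the product of $\Pup{j}$ matrices that the $(J,I)$ entry equals $\frac{(k-i)!\, w(I)}{w(J)} \cdot \frac{1}{\binom{?}{?}}$ — actually the matrix product $\prod_{j=i}^{k-1}\Pup{j}$ has $(J,I)$ entry $\sum_{\text{chains } J \subset \cdots \subset I} \prod (\text{factors})$, and the number of maximal chains from a fixed $J$ of size $i$ up to a fixed $I \supset J$ of size $k$ is $(k-i)!$, each contributing $\frac{w(I)}{w(J)}$ after the telescoping cancellation of intermediate weights, together with the product $\prod_{j=i}^{k-1}\frac{1}{j+1} = \frac{i!}{k!}$ of the $\diag$ normalisations. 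This should give exactly $f^{(i)}(J) = \frac{i!\,(k-i)!}{k!}\cdot\frac{\text{something}}{w(J)}$... comparing to $\Ex_{\pi_{J,k-i}} f^{(k)} = \sum_{I \supset J} \pi_{J,k-i}(I) f^{(k)}(I) = \sum_{I \supset J} \frac{(k-i)!\, w(I)}{w(J)} f^{(k)}(I)$, I expect the factorial bookkeeping to come out right, and the main obstacle is precisely nailing these combinatorial constants without sign or off-by-one errors.

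For part \eqref{case:normalisation-level-i}, I would argue that $\Ex_{\pi_i} f^{(i)} = \sum_{J \in \+M(i)} \frac{w(J)}{Z_i} f^{(i)}(J)$, and substitute the expression from part \eqref{case:f-sup-i}: $\sum_J \frac{w(J)}{Z_i} \Ex_{\pi_{J,k-i}} f^{(k)} = \sum_J \frac{w(J)}{Z_i} \sum_{I \supset J,\, |I|=k} \frac{(k-i)!\, w(I)}{w(J)} f^{(k)}(I) = \frac{(k-i)!}{Z_i} \sum_{I \in \+M(k)} w(I) f^{(k)}(I) \cdot |\{J \subset I : |J| = i\}| = \frac{(k-i)!\binom{k}{i}}{Z_i} \sum_I w(I) f^{(k)}(I)$. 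Since $\sum_I w(I) f^{(k)}(I) = Z_k \Ex_{\pi_k} f^{(k)} = Z_k$ by the normalisation hypothesis, and since $k! Z_k = Z_0 = i! Z_i$ gives $Z_k/Z_i = i!/k!$, the whole thing becomes $\frac{(k-i)!\binom{k}{i}\, i!}{k!} = \frac{(k-i)!\, k!\, i!}{i!\,(k-i)!\,k!} = 1$, as required. Alternatively — and more slickly — part \eqref{case:normalisation-level-i} follows from the remark just before the lemma: $\transpose{(f^{(i)})} D_i = \transpose{(f^{(k)})} D_k \prod_{j=i}^{k-1}\Pdown{j+1}$ is a probability distribution since each $\Pdown{\bullet}$ is stochastic and $\transpose{(f^{(k)})}D_k$ is a distribution, and a distribution summing to $1$ is exactly the statement $\Ex_{\pi_i} f^{(i)} = 1$; this needs the identity $D_i \Pup{i} = \transpose{(\Pdown{i+1})} D_{i+1}$, i.e.\ \eqref{eqn:Pup-Pdown}, plus associativity. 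I would present the induction for part \eqref{case:f-sup-i} carefully and then derive part \eqref{case:normalisation-level-i} from it (or from the stochasticity argument, whichever the authors prefer), flagging the factorial constants as the only real point of friction.
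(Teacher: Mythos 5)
Your overall strategy --- unwind the product $\prod_{j=i}^{k-1}\Pup{j}$, telescope the weights along maximal chains, and count the $(k-i)!$ chains --- is the right one and is essentially the paper's argument; the paper just packages it as a one-step downward induction on $i$, using $\abs{\{I\in\+M(i+1): J\subset I\subset K\}}=k-i$ at each step to accumulate the same $(k-i)!$. Your direct-substitution treatment of part \eqref{case:normalisation-level-i} is exactly the paper's, and your alternative via stochasticity of the $\Pdown{\bullet}$ operators together with \eqref{eqn:Pup-Pdown} is also valid.

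The point you flag as unresolved is, however, a genuine error as written, and with your stated constants the bookkeeping does \emph{not} come out right. The factor $\frac{1}{k+1}$ lives entirely in $\Pdown{k+1}=\frac{1}{k+1}\transpose{A_k}$, not in $\Pup{k}$: by \eqref{eqn:Pup}, $\Pup{k}(I,J)=\frac{w(J)}{w(I)}$ for $I\subset J$, with no $\frac{1}{k+1}$, and this is already row-stochastic because the weight recursion gives $\sum_{J\supset I}w(J)=w(I)$ exactly (the self-loop probability $\frac{1}{k+1}$ in \eqref{eqn:RWup} appears only after composing with $\Pdown{k+1}$). Consequently the product $\prod_{j=i}^{k-1}\frac{1}{j+1}=\frac{i!}{k!}$ of ``$\diag$ normalisations'' you want to insert simply is not there; if it were, your formula would give $f^{(i)}(J)=\binom{k}{i}^{-1}\Ex_{\pi_{J,k-i}}f^{(k)}$, off by $\binom{k}{i}$, and part \eqref{case:normalisation-level-i} would then fail as well. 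With the correct reading of \eqref{eqn:Pup}, each maximal chain $J=J_i\subset\cdots\subset J_k=I$ contributes $\prod_{j}\frac{w(J_{j+1})}{w(J_j)}=\frac{w(I)}{w(J)}$ after telescoping, there are $(k-i)!$ such chains, so the $(J,I)$ entry of $\prod_{j=i}^{k-1}\Pup{j}$ is $\frac{(k-i)!\,w(I)}{w(J)}=\pi_{J,k-i}(I)$ by \eqref{eqn:pi-I-k} --- which is precisely part \eqref{case:f-sup-i} with no leftover constant. Once this is fixed, the rest of your plan goes through.
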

\begin{proof}
  For \eqref{case:f-sup-i}, first notice that
  \begin{align*}
      \transpose{\delta_{J}}\prod_{j=i}^{k-1}\Pup{j}=\transpose{\delta_{J}}\prod_{j=i}^{k-1}\left[\diag(\*w_j)^{-1}A_j\diag(\*w_{j+1})\right]=\frac{\transpose{\delta_{J}}}{w(J)}\prod_{j=i}^{k-1}A_j\diag(\*w_{k})=\pi_{J,k-i},
  \end{align*}
  where $\delta_{J}$ is the Dirac vector that equals $1$ at $J$ and $0$ elsewhere. The last equality holds due to the fact that the product of the adjacency matrices counts the paths from independent sets at level $i$ to independent sets at level $k$. For every such pair of sets, the number of these paths is $(k-i)!$ if one is contained in the other, or $0$ otherwise. It follows that
  \begin{align*}
    \Ex_{\pi_{J,k-i}}f^{(k)} = \pi_{J,k-i}f^{(k)} = \transpose{\delta_{J}}\prod_{j=i}^{k-1}\Pup{j}f^{(k)} = \transpose{\delta_{J}}f^{(i)} = f^{(i)}(J).
  \end{align*}
  For \eqref{case:normalisation-level-i}, we have that
  \begin{align*}
    \Ex_{\pi_{i}}f^{(i)} & = \pi_{i}\prod_{j=i}^{k-1}\Pup{j} f^{(k)} \\
    & = \pi_{k}f^{(k)} \tag{by \Cref{eqn:pi-Pup}}\\
    & =  \Ex_{\pi_{k}}f^{(k)}. \qedhere
  \end{align*}
\end{proof}
Now we are ready to establish the base case of the entropy's contraction.

\begin{lemma}  \label{lem:k=2}
  Let $f^{(2)}:\+M(2)\rightarrow\=R_{\ge 0}$ be a non-negative function defined on $\+M(2)$.
  Then,
  \begin{align*}
    \Ent{\pi_2}{f^{(2)}} \ge 2\Ent{\pi_1}{f^{(1)}}.
  \end{align*}
\end{lemma}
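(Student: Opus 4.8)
The plan is to reduce the statement to a pointwise inequality on pairs $\{u,v\}\in\+M(2)$ and then invoke the one-positive-eigenvalue bound from \Cref{lem:quadratic-bound}. We may assume $\Ex_{\pi_2}f^{(2)}=1$ (both sides of the claimed inequality scale linearly under $f^{(2)}\mapsto cf^{(2)}$, using that $f^{(1)}=\Pup{1}f^{(2)}$ is linear in $f^{(2)}$), so that $\Ent{\pi_2}{f^{(2)}}=\Ex_{\pi_2}(f^{(2)}\log f^{(2)})$ and, by \Cref{lem:f-k}, $\Ex_{\pi_1}f^{(1)}=1$ as well, so $\Ent{\pi_1}{f^{(1)}}=\Ex_{\pi_1}(f^{(1)}\log f^{(1)})$. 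By \Cref{lem:f-k}\eqref{case:f-sup-i}, for $v\in\+M(1)$ we have $f^{(1)}(v)=\Ex_{\pi_{v,1}}f^{(2)}=\sum_{u}\pi_{v,1}(\{u,v\})f^{(2)}(\{u,v\})$, i.e.\ $f^{(1)}(v)$ is a weighted average of the values $f^{(2)}(\{u,v\})$ over pairs containing $v$.

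First I would expand the entropy on level $1$ and rewrite it as a sum over ordered pairs:
\begin{align*}
  \Ent{\pi_1}{f^{(1)}} = \sum_{v\in\+M(1)}\pi_1(v)\,f^{(1)}(v)\log f^{(1)}(v)
  = \sum_{v}\pi_1(v)\sum_{u}\pi_{v,1}(\{u,v\})\,f^{(2)}(\{u,v\})\log f^{(1)}(v),
\end{align*}
where I used $f^{(1)}(v)=\sum_u\pi_{v,1}(\{u,v\})f^{(2)}(\{u,v\})$ to distribute the outer factor $f^{(1)}(v)$ into the sum over $u$. Similarly $\Ent{\pi_2}{f^{(2)}}$ can be written as a sum over unordered pairs, which I would symmetrize into a sum over ordered pairs $(u,v)$ with the matching weight $\pi_1(v)\pi_{v,1}(\{u,v\})$ (checking that the product of normalizing constants matches, i.e.\ that $\pi_1(v)\pi_{v,1}(\{u,v\})$ is symmetric in $u,v$ up to the $\pi_2$ weight — this is where the recursive definition of $w$ and the identity $k!Z_k=Z_0$ get used). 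The target inequality then becomes: for each ordered-pair weight $\mu(u,v)$,
\begin{align*}
  \sum_{u,v}\mu(u,v)\,f^{(2)}(\{u,v\})\log f^{(2)}(\{u,v\})
  \ \ge\ 2\sum_{u,v}\mu(u,v)\,f^{(2)}(\{u,v\})\log f^{(1)}(v).
\end{align*}

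To prove this I would use the variational/Gibbs form of entropy. Fix $v$ and consider the conditional distribution $\pi_{v,1}(\{\cdot,v\})$ on the link of $v$; then $\sum_u\pi_{v,1}(\{u,v\})f^{(2)}(\{u,v\})\log\frac{f^{(2)}(\{u,v\})}{f^{(1)}(v)} = \Ent{\pi_{v,1}}{f^{(2)}(\{\cdot,v\})/f^{(1)}(v)}\cdot f^{(1)}(v)\ge 0$ by nonnegativity of relative entropy, which already yields $\Ent{\pi_2}{f^{(2)}}\ge\Ent{\pi_1}{f^{(1)}}$ (contraction factor $1$). To upgrade the factor from $1$ to $2$ is the crux, and this is where strong log-concavity must enter — the bare simplicial structure only gives factor $1$. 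The idea is that the ``second copy'' of $\Ent{\pi_1}{f^{(1)}}$ should be controlled by a Dirichlet-form-type quantity on level $1$ coming from the spectral bound: writing $g=f^{(1)}$ on $\+M(1)$ with $\Ex_{\pi_1}g=1$, one relates $\Ent{\pi_1}{g}$ to the quadratic form $\transpose{g}W_{\emptyset}g$ (or its $\partial_I$ analogues) and applies \Cref{lem:quadratic-bound} to bound that quadratic form by $w(\emptyset)$, exploiting that $W_\emptyset$ has at most one positive eigenvalue with $\sqrt{\pi_1}$ the top eigenvector. Concretely I expect the argument to run: bound $\Ent{\pi_1}{g}\le \tfrac12(\transpose{g}W'g - (\Ex_{\pi_1}g)^2\cdot c)$ for the appropriate normalized $W'$ via the elementary inequality $x\log x\le \tfrac12(x^2-1) + (x-1)\cdot\tfrac12$... more precisely via $x\log x \le x^2-x$ combined with the eigenvalue decomposition, and then match terms with the level-$2$ entropy.

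The main obstacle I anticipate is exactly this last step: turning the ``at most one positive eigenvalue'' of $W_\emptyset$ into the factor-$2$ improvement of the entropy contraction, rather than merely a spectral-gap (variance) statement. The natural route is to find a convexity inequality of the form $a\log a \ge 2a\log b - (\text{quadratic correction in }a,b)$ that is tight enough, apply it pointwise with $a=f^{(2)}(\{u,v\})$, $b=f^{(1)}(v)$, sum against $\mu$, and then recognize the aggregated quadratic correction as $\le 0$ using \Cref{lem:quadratic-bound} applied to $f=g=f^{(1)}$ (suitably normalized on each link). I would first try the pointwise bound $a\log a - 2a\log b = a\log a - 2a\log b \ge -a b \cdot(\text{something})$ and calibrate constants so that summing over $u$ (with $b$ fixed) produces $f^{(1)}(v)\log f^{(1)}(v)$ plus a term of the form $-\transpose{(f^{(2)}(\{\cdot,v\}))}(\text{local }W)(f^{(2)}(\{\cdot,v\}))$, and then sum over $v$ and close with the quadratic bound; if the naive constant doesn't close, the fallback is to interpolate, using $\Ent{\pi_2}{f^{(2)}}\ge\Ent{\pi_1}{f^{(1)}}$ for ``half'' and the spectral/quadratic estimate for the other ``half''.
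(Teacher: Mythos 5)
Your overall architecture matches the paper's: normalise so that $\Ex_{\pi_2}f^{(2)}=\Ex_{\pi_1}f^{(1)}=1$, rewrite both entropies as sums over pairs using $\pi_2(\{u,v\})=2\pi_1(v)\pi_{v,1}(\{u,v\})$, linearise the logarithm, and close with \Cref{lem:quadratic-bound} applied to $W_\emptyset$ and $f^{(1)}$. You also correctly locate the crux: the trivial data-processing argument only gives contraction factor $1$, and the upgrade to $2$ is exactly where strong log-concavity must enter.

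However, there is a genuine gap at that crux, and it is the one step you leave uncalibrated. The decisive move is the choice of comparison point in the linearisation: because each unordered pair $S=\{u,v\}$ receives the level-$1$ contribution from \emph{both} of its endpoints, the quantity $2\Ent{\pi_1}{f^{(1)}}$ regroups as $\sum_{S=\{u,v\}}\pi_2(S)\,f^{(2)}(S)\bigl(\log f^{(1)}(u)+\log f^{(1)}(v)\bigr)$, so the difference $\Ent{\pi_2}{f^{(2)}}-2\Ent{\pi_1}{f^{(1)}}$ is $\sum_S \pi_2(S)\, a_S\log\frac{a_S}{b_S}$ with $a_S=f^{(2)}(S)$ and $b_S=f^{(1)}(u)f^{(1)}(v)$, the \emph{product} over the two endpoints. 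Applying $a\log\frac{a}{b}\ge a-b$ then yields $1-\frac{1}{2Z_2}\transpose{\bigl(f^{(1)}\bigr)}W_\emptyset f^{(1)}$, which is $\ge 0$ by \Cref{lem:quadratic-bound} since $w(\emptyset)=Z_1=2Z_2$. Your candidate calibrations do not produce this: taking $b=f^{(1)}(v)$ alone with a correction of the form $-ab$ leaves a cross term $f^{(2)}(\{u,v\})f^{(1)}(v)$, and your alternative produces a quadratic form in $f^{(2)}$ restricted to a link; neither is the object $\transpose{\bigl(f^{(1)}\bigr)}W_\emptyset f^{(1)}$ that \Cref{lem:quadratic-bound} controls (that lemma needs a quadratic form in a function on $\+M(1)$ with $\pi_{I,1}$-mean $1$). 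Your fallback of ``factor $1$ for half plus a spectral estimate for the other half'' would require $\sum_v\pi_1(v)\Ent{\pi_{v,1}}{f^{(2)}}\ge\Ent{\pi_1}{f^{(1)}}$, which is not available at the base case without an argument of exactly this kind, so it does not close the loop. With the product comparison point inserted, your proof becomes the paper's.
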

\begin{proof}
  Without loss of generality we may assume that $\Ex_{\pi_2}f^{(2)}=1$ and therefore $\Ex_{\pi_1}f^{(1)}=1$ by \eqref{case:normalisation-level-i} of \Cref{lem:f-k}.
  Note that for $v\in E$,
  \begin{align*}
    f^{(1)}(v) = \sum_{S\in\+M(2):v\in S}\frac{w(S)}{w(v)}f^{(2)}(S).
  \end{align*}
  We will use the following inequality, which is valid for any $a\ge 0$ and $b > 0$,
  \begin{align}\label{eqn:log-a-b}
    a\log\frac{a}{b}\ge a-b.
  \end{align}
  Noticing that $Z_1=2Z_2$, we have
  \begin{align*}
    &\Ent{\pi_2}{f^{(2)}} - 2 \Ent{\pi_{1}}{f^{(1)}} \\
    =\; & \sum_{S\in\+M(2)}\pi_2(S)f^{(2)}(S)\log f^{(2)}(S)-2 \sum_{v\in E}\pi_1(v)\left(\sum_{S\in\+M(2):v\in S}\frac{w(S)}{w(v)}f^{(2)}(S)\right)\log f^{(1)}(v)\\
    =\; & \sum_{S\in\+M(2)}\left(\pi_2(S)f^{(2)}(S)\log f^{(2)}(S) - 
    2\sum_{v\in S}\pi_1(v)\frac{w(S)}{w(v)}f^{(2)}(S)\log f^{(1)}(v)\right)\\
    =\; & \sum_{S\in\+M(2)}\left(\frac{w(S)}{Z_2}f^{(2)}(S)\log f^{(2)}(S) - 
    2\sum_{v\in S}\frac{w(v)}{Z_1}\cdot\frac{w(S)}{w(v)}f^{(2)}(S)\log f^{(1)}(v)\right)\\
    =\; & \sum_{S=\{u,v\}\in\+M(2)}\frac{w(S)}{Z_2}f^{(2)}(S)\left(\log f^{(2)}(S) - \log f^{(1)}(v)-\log f^{(1)}(u)\right)\\
    \ge\; & \sum_{S=\{u,v\}\in\+M(2)}\frac{w(S)}{Z_2}\left(f^{(2)}(S) - f^{(1)}(v)f^{(1)}(u)\right)\\
    =\; & \sum_{S\in\+M(2)}\pi_2(S)f^{(2)}(S) - \sum_{S=\{u,v\}\in\+M(2)}\frac{w(S)}{Z_2}\cdot f^{(1)}(v)f^{(1)}(u)\\
    =\; & 1 - \frac{1}{2Z_2}\cdot\transpose{\left( f^{(1)} \right)}W_{\emptyset}f^{(1)},
  \end{align*}
  where the inequality is by \eqref{eqn:log-a-b} with $a=f^{(2)}(S)$ and $b=f^{(1)}(u)f^{(1)}(v)$ when $b>0$,
  and when $b=0$ we have $a=0$ as well.
  Thus, the lemma follows from \Cref{lem:quadratic-bound} with $I=\emptyset$ and $w(\emptyset)=Z_1=2Z_2$.
\end{proof}

We generalise \Cref{lem:k=2} as follows.

\begin{lemma}  \label{lem:entropy-contraction}
  Let $k\ge 2$ and $f^{(k)}:\+M(k)\rightarrow\=R_{\ge 0}$ be a non-negative function defined on $\+M(k)$.
  Then 
  \begin{align*}
    \Ent{\pi_k}{f^{(k)}} \ge \frac{k}{k-1}\Ent{\pi_{k-1}}{f^{(k-1)}}.
  \end{align*}
\end{lemma}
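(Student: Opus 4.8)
The base case $k=2$ is exactly \Cref{lem:k=2}, so the plan is to induct on $k$, reducing the level-$k$ contraction to a level-$(k-1)$ contraction inside each contracted matroid. First I would normalise $\Ex_{\pi_k}f^{(k)}=1$, which by \Cref{lem:f-k}\eqref{case:normalisation-level-i} forces $\Ex_{\pi_i}f^{(i)}=1$ for all $i\le k$, so that all entropies simplify to $\Ex(f\log f)$. The heart of the argument is a decomposition of $\Ent{\pi_k}{f^{(k)}}$ by conditioning on the single element removed in the first step of $\RWdown{k}$. Concretely, for each $v\in E$ with $\{v\}\in\+M(1)$, restricting to bases (of size $k$) containing $v$ gives the contracted matroid $\+M_{\{v\}}$ with weight function $w_{\{v\}}$; the function $f^{(k)}$ restricted there is, up to normalisation, a non-negative function $g_v$ on $\+M_{\{v\}}(k-1)$ with $\Ex_{\pi_{\{v\},k-1}}g_v = f^{(1)}(v)$ by \Cref{lem:f-k}\eqref{case:f-sup-i}. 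Applying the inductive hypothesis to $g_v/f^{(1)}(v)$ inside $\+M_{\{v\}}$ yields a contraction from level $k-1$ down to level $k-2$ in that sub-matroid.

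**Key steps, in order.** (i) Write a "chain rule / tensorization" identity: express $\Ent{\pi_k}{f^{(k)}}$ as a weighted average over $v$ of the conditional entropies $\Ent{\pi_{\{v\},k-1}}{f^{(k)}}$ (entropy inside the contraction) plus an "entropy of the marginals" term involving $\Ent{\pi_1}{f^{(1)}}$ — this is the place where the homogeneity/normalisation $k!Z_k = Z_0$ and the identities relating $w(I)$ across levels get used to make the weights line up. (ii) Apply the induction hypothesis to each contraction $\+M_{\{v\}}$ (which is again the matroid of an $(r-1)$-homogeneous strongly log-concave distribution, a fact guaranteed by \citet[Theorem 7.1]{BH19} applied to $\partial_v g_\pi$): this gives $\Ent{\pi_{\{v\},k-1}}{f^{(k)}} \ge \frac{k-1}{k-2}\Ent{\pi_{\{v\},k-2}}{f^{(k)}}$, where the level-$(k-2)$ function inside $\+M_{\{v\}}$ is precisely the restriction of $f^{(2)}$ (the global level-$2$ push-down) to sets containing $v$. (iii) Re-assemble: the collection $\{f^{(k)}|_{\+M_{\{v\}}(k-2)}\}_v$ reconstitutes $f^{(2)}$ and $f^{(1)}$, so summing the per-$v$ bounds with the correct weights, together with the base-case bound \Cref{lem:k=2} (equivalently the quadratic form bound of \Cref{lem:quadratic-bound}) applied to $f^{(1)}$, produces $\Ent{\pi_k}{f^{(k)}} \ge \frac{k}{k-1}\Ent{\pi_{k-1}}{f^{(k-1)}}$ after bookkeeping the telescoping constants $\frac{k}{k-1}\cdot\frac{k-1}{k-2} = \frac{k}{k-2}$.

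**Main obstacle.** The delicate point is step (i): finding the exact decomposition of $\Ent{\pi_k}{f^{(k)}}$ into a sum over $v$ of contracted entropies plus a residual term, and verifying that the residual is controlled by the base case rather than being an error one must separately estimate. A naive conditioning over-counts, since each size-$k$ independent set contains $k$ distinct singletons; the factor-of-$k$ overcounting is what ultimately yields the $\frac{k}{k-1}$ improvement over a trivial bound, but getting the weights in $\pi_{\{v\},k-1}$ versus $\pi_k$ to agree requires carefully using $Z_{\{v\},k-1} = w(v)/(k-1)!$ and $\pi_{\{v\},k-1}(J) = (k-1)! w(J)/w(v)$ from \eqref{eqn:pi-I-k}. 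A clean way to organise this is to prove the identity
\begin{align*}
  \Ent{\pi_k}{f^{(k)}} = \frac{1}{k}\sum_{v\in E}\pi_1(v) f^{(1)}(v)\,\Ent{\pi_{\{v\},k-1}}{\widehat f^{(k)}_v} + \Ent{\pi_1}{f^{(1)}},
\end{align*}
where $\widehat f^{(k)}_v = f^{(k)}|_{\{S:v\in S\}}/f^{(1)}(v)$ is the normalised conditional function, and an analogous identity one level down relating $\Ent{\pi_{k-1}}{f^{(k-1)}}$ to the conditional entropies $\Ent{\pi_{\{v\},k-2}}{\widehat f^{(k-1)}_v}$ plus $\Ent{\pi_1}{f^{(1)}}$; then induction inside each contraction plus \Cref{lem:k=2} for the level-$2$ residual closes the loop. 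I expect verifying these two chain-rule identities — i.e. that the "conditional + marginal" split is exact, with no cross terms — to be where essentially all the work lies, with the rest being the telescoping arithmetic already rehearsed above.
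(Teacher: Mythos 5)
Your skeleton is the paper's own: induct on $k$, split the entropy over the singleton contractions $\+M_{\{v\}}$ by a chain rule, apply the induction hypothesis inside each contraction, and reassemble. But two points need fixing. First, the chain-rule identity you propose is false as stated: the prefactor $\frac{1}{k}$ is spurious. Since $\pi_k(I)=k\,\pi_1(v)\,\pi_{v,k-1}(I)$ for each of the $k$ elements $v\in I$, summing over $v\in I$ gives exactly $\pi_k(I)=\sum_{v}\pi_1(v)\pi_{v,k-1}(I)$ --- an honest mixture whose weights $\pi_1(v)$ already sum to one --- so the correct identity is
\begin{align*}
  \Ent{\pi_k}{f^{(k)}}=\sum_{v}\pi_1(v)\Ent{\pi_{v,k-1}}{f^{(k)}}+\Ent{\pi_1}{f^{(1)}},
\end{align*}
which is \eqref{eqn:decomp-k+1} (shifted by one level). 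There is no ``factor-of-$k$ overcounting,'' and the $\frac{k}{k-1}$ gain does not come from one. A related bookkeeping slip: pushing $f^{(k)}\vert_v$ down one level inside $\+M_{\{v\}}$ lands on the restriction of $f^{(k-1)}$ to sets containing $v$, not on $f^{(2)}$.

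Second, and more seriously, the reassembly step has a genuine hole. After the chain rule and the induction hypothesis inside each $\+M_{\{v\}}$ you have $\Ent{\pi_k}{f^{(k)}}\ge\frac{k-1}{k-2}S+E$, where $S=\sum_v\pi_1(v)\Ent{\pi_{v,k-2}}{f^{(k-1)}}$ and $E=\Ent{\pi_1}{f^{(1)}}$, while the target, by the same chain rule one level down, is $\frac{k}{k-1}(S+E)$. The conditional part carries a surplus (since $\frac{k-1}{k-2}>\frac{k}{k-1}$), but the marginal term $E$ appears with coefficient $1$ instead of $\frac{k}{k-1}$, so you must transfer surplus from $S$ to $E$. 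The paper does this with the telescoped induction hypothesis $\Ent{\pi_{k-1}}{f^{(k-1)}}\ge(k-1)\Ent{\pi_1}{f^{(1)}}$, which via the chain rule yields $S\ge(k-2)E$ (this is \eqref{eqn:sum-k-level-1}), exactly enough for the surplus $\frac{1}{(k-1)(k-2)}S$ to cover the deficit $\frac{1}{k-1}E$. Your plan of ``applying the base case \Cref{lem:k=2} to $f^{(1)}$'' does not supply this: that lemma only relates levels $2$ and $1$, whereas what is needed is the contraction composed all the way from level $k-1$ down to level $1$ on $\+M$ itself. Without this extra use of the induction hypothesis the coefficients do not close, so the inductive step as written fails.
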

\begin{proof}
  We do an induction on $k$.
  The base case of $k=2$ follows from \Cref{lem:k=2}.

  For the induction step, assume the lemma holds for all integers at most $k$ for any matroid $\+M$.
  Let $f^{(k+1)}:\+M(k+1)\rightarrow\=R_{\ge 0}$ be a non-negative function such that $\Ex_{\pi_{k+1}}f^{(k+1)}=1$.

  Recall \eqref{eqn:pi-I-k},
  where we define $\pi_{v,k}$ over $\+M(k+1)$ instead of over $\+M_v(k)$.
  For $I\in\+M(k+1)$, $v\in\+M(1)$ and $v\in I$,
  \begin{align*}
    \pi_{k+1}(I) = \frac{w(I)}{Z_{k+1}} = (k+1)\cdot\frac{w(v)}{(k+1)!Z_{k+1}}\cdot \frac{k!w(I)}{w(v)} = (k+1)\pi_{1}(v)\pi_{v,k}(I),
  \end{align*}
  as $Z_{1}=(k+1)!Z_{k+1}$.
  This means that
  \begin{align}\label{eqn:decomp}
    \pi_{k+1}(I) =  \sum_{v\in\+M(1),v\in I}\pi_{1}(v)\pi_{v,k}(I) = \sum_{v\in\+M(1)}\pi_{1}(v)\pi_{v,k}(I).
  \end{align}
  Thus $\pi_{k+1}$ is a mixture of $\pi_{v,k}$.

  We use the ``chain rule'' of entropy to 
  decompose $\Ent{\pi_{k+1}}{f^{(k+1)}}$ with respect to the entropy of $f^{(1)}$ 
  (``projection'') and the entropy conditioned on having each $v$ (``restriction'').
  To be more precise, we have
  \begin{align*}
    \Ex_{\pi_{k+1}}f^{(k+1)}\log f^{(k+1)} = \sum_{v\in\+M(1)}\pi_{1}(v) \Ex_{\pi_{v,k}}f^{(k+1)}\log f^{(k+1)}.
  \end{align*}
  This implies that
  \begin{align}
    & \hspace{0.7cm} \Ent{\pi_{k+1}}{f^{(k+1)}} \notag\\
    & = \sum_{v\in\+M(1)}\pi_{1}(v)\Ent{\pi_{v,k}}{f^{(k+1)}} +\sum_{v\in\+M(1)}\pi_1(v)\left(\Ex_{\pi_{v,k}}f^{(k+1)}\right)\log \left( \Ex_{\pi_{v,k}}f^{(k+1)} \right)\notag\\
    & = \sum_{v\in\+M(1)}\pi_{1}(v)\Ent{\pi_{v,k}}{f^{(k+1)}} + \Ent{\pi_1}{f^{(1)}},\label{eqn:decomp-k+1}
  \end{align}
  where we use \eqref{case:f-sup-i} and \eqref{case:normalisation-level-i} of \Cref{lem:f-k}.
  Similarly,
  \begin{align}    \label{eqn:decomp-k}
    \Ent{\pi_{k}}{f^{(k)}} 
    & = \sum_{v\in\+M(1)}\pi_{1}(v)\Ent{\pi_{v,k-1}}{f^{(k)}} + \Ent{\pi_1}{f^{(1)}}.
  \end{align}

  For any $v\in\+M(1)$, the contracted matroid $\+M_v$ with weight function $w_v(I)=w(I\cup v)$ for $I\subseteq E\setminus\{v\}$
  corresponds to an $(r-1)$-homogeneous strongly log-concave distribution. (Recall \Cref{def:str-log-concave}.)
  Thus, we can apply the induction hypothesis on $\+M_v$ at level $k$ and get
  \begin{align}\label{eqn:entropy-IH}
    \Ent{\pi_{v,k}}{f^{(k+1)}} \ge \frac{k}{k-1}\cdot\Ent{\pi_{v,k-1}}{f^{(k)}}.
  \end{align}
  Strictly speaking, in \eqref{eqn:entropy-IH} we should apply the induction hypothesis to $f_v^{(k)}$ which is the restriction of $f^{(k+1)}$ to $J\in\+M(k+1)$ and $J\ni v$,
  and then ``push it down'' to $f_v^{(k-1)}$ defined over $I\in\+M(k)$ and $I\ni v$ as
  \begin{align*}
    f_v^{(k-1)}(I)\defeq\sum_{J\in\+M(k+1):J\supset I}\frac{w(J)}{w(I)}\cdot f_v^{(k)}(J) = \sum_{J\in\+M(k+1):J\supset I}\frac{w(J)}{w(I)}\cdot f^{(k+1)}(J).
  \end{align*}
  However, $f_v^{(k)}$ agrees with $f^{(k+1)}$ on the support of $\pi_{v,k}$,
  and $f_v^{(k-1)}$ agrees with $f^{(k)}$ on the support of $\pi_{v,k-1}$.
  This validates \eqref{eqn:entropy-IH}.

  Furthermore, using the induction hypothesis on $\+M$ from level $k$ to level $1$, we have that
  \begin{align}    \label{eqn:entropy-IH-1-k}
    \Ent{\pi_{k}}{f^{(k)}}\ge k\cdot\Ent{\pi_{1}}{f^{(1)}}.
  \end{align}
  Thus, \eqref{eqn:decomp-k} and \eqref{eqn:entropy-IH-1-k} together imply that
  \begin{align}    \label{eqn:sum-k-level-1}
    \sum_{v\in\+M(1)}\pi_{1}(v)\Ent{\pi_{v,k-1}}{f^{(k)}} \ge (k-1)\Ent{\pi_1}{f^{(1)}}.
  \end{align}
  Putting everything together,
  \begin{align*}
    \Ent{\pi_{k+1}}{f^{(k+1)}} &= \sum_{v\in\+M(1)}\pi_{1}(v)\Ent{\pi_{v,k}}{f^{(k+1)}} + \Ent{\pi_1}{f^{(1)}} \tag{by \eqref{eqn:decomp-k+1}}\\
    & \ge \frac{k}{k-1}\sum_{v\in\+M(1)}\pi_{1}(v)\Ent{\pi_{v,k-1}}{f^{(k)}} + \Ent{\pi_1}{f^{(1)}} \tag{by \eqref{eqn:entropy-IH}}\\
    & = \left(\frac{k+1}{k}+\frac{1}{k(k-1)}\right)\sum_{v\in\+M(1)}\pi_{1}(v)\Ent{\pi_{v,k-1}}{f^{(k)}}+\Ent{\pi_1}{f^{(1)}}\\
    & \ge \frac{k+1}{k}\sum_{v\in\+M(1)}\pi_{1}(v)\Ent{\pi_{v,k-1}}{f^{(k)}} + \frac{k+1}{k}\Ent{\pi_1}{f^{(1)}} \tag{by \eqref{eqn:sum-k-level-1}}\\
    & = \frac{k+1}{k}\Ent{\pi_{k}}{f^{(k)}}.\tag{by \eqref{eqn:decomp-k}}
  \end{align*}
  This concludes the inductive step and thus the proof.
\end{proof}
\begin{remark}
  We remark that our decomposition of the relative entropy \eqref{eqn:decomp-k+1} is ``horizontal'' with respect to elements of $\+M(1)$.
  This decomposition is different from the decomposition by \citet[Theorem 5.2]{KO18} in a similar context,
  where they decompose ``vertically'' across all levels.
\end{remark}

The decomposition \eqref{eqn:decomp} of $\pi_k$ appears to be the key to \Cref{lem:entropy-contraction}.
An alternative way to understand it is the following.
Consider the process which first draws a basis $B\sim \pi$,
and then repeatedly removes an element from the current set uniformly 
at random for at most~$r$ repetitions.
Let $X_k$ be the outcome of this process after removing $r-k$ elements.
Then $\abs{X_k}=k$, and $\pi_k(I)=\Pr(X_k=I)$ for $I\in\+M(k)$.
Moreover, 
\begin{align*}
  \Pr(X_1=\{v\}\mid X_k=I)=
  \begin{cases}
    \frac{1}{k} & \text{if $v\in I$};\\
    0 & \text{otherwise}.
  \end{cases}
\end{align*}
By Bayes' rule,
\begin{align*}
  \Pr(X_k=I\mid X_1=\{v\})\Pr(X_1=\{v\}) = \Pr(X_1=\{v\}\mid X_k=I)\Pr(X_k=I).
\end{align*}
Summing over $v$, since $\sum_{v\in \+M(1)}\Pr(X_1=\{v\}\mid X_k=I)=1$, we have
\begin{align}
  &\sum_{v\in \+M(1)}\Pr(X_k=I\mid X_1=\{v\})\Pr(X_1=\{v\})\notag\\
  =~&\Pr(X_k=I)\sum_{v\in \+M(1)}\Pr(X_1=\{v\}\mid X_k=I)\notag\\
  =~&\Pr(X_k=I). \label{eqn:decomp-2}
\end{align}
Noticing that $\Pr(X_k=I\mid X_1=\{v\}) = \pi_{v,k-1}(I)$,
equation \eqref{eqn:decomp-2} recovers \eqref{eqn:decomp}.

By recalling \eqref{eqn:f-level-i} and \eqref{eqn:left-multiply},
we observe that the analysis of the ``going-down'' half---and, similarly, the ``going-up'' half---of $\RWdown{k}$ and $\RWup{k-1}$ corresponds to premultiplying by $\Pup{k-1}$---or, accordingly, $\Pdown{k}$---to a function $f$.
Hence, \Cref{lem:entropy-contraction} implies that the relative entropy contracts by $1-\frac{1}{k}$ in the ``going-down'' half of the random walks $\RWdown{k}$ and $\RWup{k-1}$.
What we show next is that the other half will not increase the relative entropy; a fact which is a special case of the so-called ``data processing inequality''.

\begin{lemma}  \label{lem:data-processing}
  For any $k\ge 2$ and $f:\+M(k-1)\rightarrow\=R_{\ge 0}$,
  \begin{align}\label{eqn:Pdown-entropy}
    \Ent{\pi_k}{\Pdown{k}f} \leq \Ent{\pi_{k-1}}{f}.
  \end{align}
\end{lemma}
\begin{proof}
  Firstly, we verify that
  \begin{align*}
    \Ex_{\pi_{k}}\Pdown{k}f & = \pi_k\Pdown{k}f\\
    &=\pi_{k-1}f =\Ex_{\pi_{k-1}}f. \tag{by \Cref{eqn:pi-Pdown}}
  \end{align*}
  Thus, we can assume both are $1$ without loss of generality. 
  Then,
  \begin{align*}
    \Ent{\pi_k}{\Pdown{k}f} & = \pi_k(\Pdown{k}f \odot \log \Pdown{k}f) \\
    & \leq  \pi_k\Pdown{k}(f \odot \log f) \tag{by Jensen's inequality on $x \log x$} \\
    & = \pi_{k-1}(f \odot \log f) \tag{by \Cref{eqn:pi-Pdown}}\\
    & = \Ent{\pi_{k-1}}{f},
  \end{align*}
  where $\odot$ stands for the Hadamard product.  
\end{proof}

With \Cref{lem:entropy-contraction,lem:data-processing} in hand, we can show the decay of relative entropy for $\RWdown{k}$ and $\RWup{k}$.

\begin{corollary}  \label{cor:rel-entropy-contraction}
  For any distribution $\tau$ on $\+M(k)$,
  \begin{itemize}
    \item if $2\le k\le r$, then $D\left(\tau\RWdown{k}~\|~\pi_k\right) \le \left(1-\frac{1}{k}\right) D(\tau~\|~\pi_k)$;
    \item if $1\le k\le r-1$, then $D\left(\tau\RWup{k}~\|~\pi_k\right) \le \left(1-\frac{1}{k+1}\right) D(\tau~\|~\pi_k)$.
  \end{itemize}
\end{corollary}
\begin{proof}
  We will only prove this corollary for $\RWdown{k}$ as the case of $\RWup{k}$ is similar. 
  We have that $D(\tau~\|~\pi_k) = \Ent{\pi_k}{D_k^{-1}\transpose{\tau}}$ where $D_k\defeq\diag(\pi_k)$.
  Since $\RWdown{k}$ is reversible, $D_k^{-1}\transpose{(\RWdown{k})} = \RWdown{k}D_k^{-1}$.
  Therefore,
  \begin{align}
    D\left(\tau\RWdown{k}~\|~\pi_k\right) & = \Ent{\pi_k}{D_{k}^{-1}\transpose{(\RWdown{k})}\transpose{\tau}}
    = \Ent{\pi_k}{\RWdown{k}D_{k}^{-1}\transpose{\tau}}\notag\\
    & \leq \Ent{\pi_{k-1}}{\Pup{k-1}D_{k}^{-1}\transpose{\tau}} \tag{by \Cref{lem:data-processing}} \\
    & \leq \left(1-\frac{1}{k}\right) \Ent{\pi_{k}}{D_{k}^{-1}\transpose{\tau}} \tag{by \Cref{lem:entropy-contraction}} \\
    & = \left(1-\frac{1}{k}\right) D\left(\tau~\|~\pi_k\right). \qedhere
  \end{align}
\end{proof}

It is well-known that the decay of relative entropy implies a mLSI.

\begin{proof}[Proof of \Cref{thm:main-complete}]
  Given any $f^{(k)}:\+M(k)\rightarrow\=R_{\ge 0}$ such that $\Ex_{\pi_k}f^{(k)}=1$, let $\tau = \transpose{(D_k f^{(k)})}$ be the distribution corresponding to $f^{(k)}$.
  Then,
  \begin{align*}
    & D\left(\tau~\|~\pi_k\right)-D\left(\tau\RWdown{k}~\|~\pi_k\right)\\
    =~& \sum_{S \in \+M(k)} \tau(S)\log\left(\frac{\tau(S)}{\pi_k(S)}\right)-\sum_{S \in \+M(k)} \tau \RWdown{k}(S)\log\left(\frac{\tau \RWdown{k}(S)}{\pi_k(S)}\right)\\
    =~& \sum_{S \in \+M(k)} \left[\tau\left(\identity-\RWdown{k}\right)\right](S)\log\left(\frac{\tau(S)}{\pi_k(S)}\right)-\sum_{S \in \+M(k)} \tau \RWdown{k}(S)\log\left(\frac{\tau \RWdown{k}(S)}{\tau(S)}\right)\\
    =~& \Diri{\RWdown{k}}{f^{(k)},\log f^{(k)}}-D\left(\tau \RWdown{k}~\|~\tau\right) \le \Diri{\RWdown{k}}{f^{(k)},\log f^{(k)}}.
  \end{align*}  
  Thus,
  \begin{align*}
    \Diri{\RWdown{k}}{f^{(k)},\log f^{(k)}}&\ge D\left(\tau~\|~\pi_k\right)-D\left(\tau\RWdown{k}~\|~\pi_k\right)\\
    &\ge \frac{1}{k}D\left(\tau~\|~\pi_k\right) = \frac{1}{k}\Ent{\pi_k}{D_k^{-1}\transpose{\tau}} \tag{by \Cref{cor:rel-entropy-contraction}}\\
    &= \frac{1}{k}\Ent{\pi_k}{f^{(k)}}.
  \end{align*}
  This proves the statement for $\RWdown{k}$. The same proof can be used for $\RWup{k}$ by replacing every occurrence of $\RWdown{k}$ with $\RWup{k}$, and the factor $\frac{1}{k}$ with $\frac{1}{k+1}$.  
\end{proof}



In fact, the contraction of relative entropy (\Cref{cor:rel-entropy-contraction}) directly implies the mixing time bound of \Cref{cor:mixing},
as illustrated by the following.

\begin{proof}[A direct proof of \Cref{cor:mixing}]
  We will only prove this for $\RWdown{k}$; the case of $\RWup{k}$ is similar. Notice that \Cref{cor:rel-entropy-contraction} implies that
  \begin{align*}
    D\left(\tau_0\left(\RWdown{k}\right)^{t} ~\big\|~ \pi_k \right) 
    & \leq \left(1-\frac{1}{k}\right)^{t}D\left(\tau_0~\|~\pi_k\right) \\
    & \leq e^{-t/k} D\left(\tau_0~\|~\pi_k\right) = e^{-t/k} \log \pi_k (x_0)^{-1},
  \end{align*}
  where $\tau_0$ is the initial distribution with $\tau_0(x_0)=1$ for some $x_0 \in \+M(k)$.
  Then, we use Pinsker's inequality 
  ($2\left\|\tau-\sigma\right\|_{\textnormal{TV}}^2 \leq D(\tau ~\|~  \sigma)$ for any
  two distributions~$\tau, \sigma$ on the same state space), to show
  \begin{align*}
    2\left\| \tau_0\left(\RWdown{k}\right)^{t} - \pi_k \right\|_{\textnormal{TV}}^2 
    \leq D\left(\tau_0\left(\RWdown{k}\right)^{t} ~\big\|~ \pi_k \right).
  \end{align*}
  Setting $e^{-t/k} \log \pi_k (x_0)^{-1} \leq 2\epsilon^2$, we conclude that
  \begin{align*}
    \left\|  \tau_0\left(\RWdown{k}\right)^{t} - \pi_k \right\|_{\textnormal{TV}} \leq \epsilon,
  \end{align*}
  whenever
  \begin{align*}
    t \geq k \left(\log \log \pi_k(x_0)^{-1} + \log \frac{1}{2\epsilon^2} \right).
  \end{align*}
  This gives us \Cref{cor:mixing} for $\RWdown{k}$.
\end{proof}

At the end of this section, let us comment that it is possible to prove the decay of variances similar to \Cref{lem:entropy-contraction},
with $\Ent{}{\cdot}$ replaced by $\Var{}{\cdot}$.
This provides an alternative proof for the spectral gap of $\RWBS$ to \citep{KO18,ALOV19}.
Indeed, the induction proof of \Cref{lem:entropy-contraction} does not require any change when one replaces $\Ent{}{\cdot}$ by $\Var{}{\cdot}$, as both of them obey the same decomposition rules.
However, the base case (namely \Cref{lem:k=2}) needs to be edited as follows.

\begin{lemma}  \label{lem:var_k=2}
  Let $f^{(2)}:\+M(2)\rightarrow\=R$.
  Then,
  \begin{align*}
    \Var{\pi_2}{f^{(2)}} \ge 2\Var{\pi_1}{f^{(1)}}.
  \end{align*}
\end{lemma}
\begin{proof}
    We begin by observing that
    \begin{align}\label{eqn:updown-decomp-1}
        \diag(w_1)\left(2\RWup{1}-\identity\right)=W_{\emptyset}.
    \end{align}
    From this identity and \Cref{prop:one-eigenvalue}, we deduce that the symmetric matrix $\diag(w_1)\left(2\RWup{1}-\identity\right)$ has at most one positive eigenvalue. Premultiplying by the positive semidefinite matrix $\diag(w_1)^{-1}$, we get that $2\RWup{1}-\identity$ also has at most one positive eigenvalue \citep[see, e.g.,][Lemma 2.6]{ALOV19}. Furthermore, the spectra of $2\RWup{1}-\identity$ and $2\RWdown{2}-\identity$ are the same up to some extra $-1$s. So, if $\abs{\+M(2)}\ge 2$ (otherwise the lemma holds trivially), $\lambda_2(\RWdown{2})\leq 1/2$ where $\lambda_2$ is the second largest eigenvalue. Then, the spectral gap $\lambda(\RWdown{2})=1-\lambda_2(\RWdown{2})\geq 1/2$, which means that
    \begin{align*}
        \Diri{\RWdown{2}}{f^{(2)},f^{(2)}}\ge \frac{1}{2}\Var{\pi_2}{f^{(2)}}.
    \end{align*}
    However, this is equivalent to the statement of the lemma, as can be seen by the following equalities:
    \begin{align*}
        \Var{\pi_1}{f^{(1)}}&=\transpose{\left(f^{(1)}\right)}D_1 f^{(1)}-\left(\Ex_{\pi_1}f^{(1)}\right)^2\\
        &=\transpose{\left(f^{(2)}\right)}\transpose{\left(\Pup{1}\right)}D_1 \Pup{1}f^{(2)}-\left(\Ex_{\pi_2}f^{(2)}\right)^2 \tag{by \Cref{lem:f-k}}\\
        &=\transpose{\left(f^{(2)}\right)}D_2\RWdown{2}f^{(2)}-\left(\Ex_{\pi_2}f^{(2)}\right)^2 \tag{by \eqref{eqn:Pup-Pdown}}\\
        &=\Var{\pi_2}{f^{(2)}}-\Diri{\RWdown{2}}{f^{(2)},f^{(2)}}. \qedhere
    \end{align*}
\end{proof}

\section{Concentration}\label{sec:concentration}

One application of the modified log-Sobolev inequalities is to show concentration inequalities,
via the Herbst argument \citep[see, e.g.,][]{BT06,BLM13}.
In the discrete setting, concentration inequalities have been obtained by \citet[Section 5]{Goel04} and can also be obtained by combining various results by \cite{BG99,Sam05,BHT06}.
The following lemma and its proof are a small modification of \citep[Lemma 5]{HS19}.
For completeness, we include all details.

\begin{lemma}  \label{lem:herbst-argument}
  Let $P$ be the transition matrix of a reversible Markov Chain with stationary distribution $\pi$ on a finite set $\Omega$, and $f:\Omega\rightarrow\=R$ be some observable function.
  Then,
  \begin{align*}
    \Pr_{x\sim\pi}(f(x)-\Ex_{\pi} f \geq a) ~\leq~ \exp\left(-\frac{\rho(P)a^2}{2v(f)}\right),
  \end{align*}
  where $a \geq 0$ and
  \begin{align*}
    v(f) := \max_{x \in \Omega}\left\{\sum_{y \in \Omega}P(x,y)(f(x)-f(y))^2\right\}.
  \end{align*}
\end{lemma}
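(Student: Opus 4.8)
The plan is to run the standard Herbst argument, bounding the moment generating function $H(\lambda) := \Ex_\pi e^{\lambda f}$ by means of the modified log-Sobolev inequality applied to $g = e^{\lambda f}$. First I would recall that, by definition of $\rho(P)$, for every nonnegative $g$ we have $\rho(P)\,\Ent{\pi}{g}\le\Diri{P}{g,\log g}$; specialising to $g=e^{\lambda f}$ gives $\Ent{\pi}{e^{\lambda f}} \le \frac{1}{\rho(P)}\Diri{P}{e^{\lambda f},\lambda f}$. The left-hand side is $\lambda H'(\lambda)-H(\lambda)\log H(\lambda)$, where $H(\lambda)=\Ex_\pi e^{\lambda f}$, since $\Ex_\pi\big(e^{\lambda f}\cdot\lambda f\big) = \lambda H'(\lambda)$.

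Next I would estimate the Dirichlet form. Using the symmetric expression \eqref{eqn:Dirichlet-reversible} for reversible $P$,
\begin{align*}
  \Diri{P}{e^{\lambda f},\lambda f} = \frac{\lambda}{2}\sum_{x,y}\pi(x)P(x,y)\big(e^{\lambda f(x)}-e^{\lambda f(y)}\big)\big(f(x)-f(y)\big).
\end{align*}
The elementary inequality $(e^{s}-e^{t})(s-t)\le \tfrac12 (s-t)^2 (e^s+e^t)$, valid for all reals $s,t$ (it follows from convexity of $e^\cdot$, or from the mean value theorem together with $e^\xi\le e^s+e^t$ when $\xi$ lies between $s$ and $t$), applied with $s=\lambda f(x)$, $t=\lambda f(y)$, yields
\begin{align*}
  \Diri{P}{e^{\lambda f},\lambda f} \le \frac{\lambda^2}{4}\sum_{x,y}\pi(x)P(x,y)(f(x)-f(y))^2\big(e^{\lambda f(x)}+e^{\lambda f(y)}\big).
\end{align*}
By reversibility the two terms in $e^{\lambda f(x)}+e^{\lambda f(y)}$ contribute equally, so this is $\le \frac{\lambda^2}{2}\sum_x \pi(x)e^{\lambda f(x)}\sum_y P(x,y)(f(x)-f(y))^2 \le \frac{\lambda^2}{2} v(f)\, H(\lambda)$, by the definition of $v(f)$. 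Combining with the mLSI gives the differential inequality $\lambda H'(\lambda) - H(\lambda)\log H(\lambda) \le \frac{\lambda^2 v(f)}{2\rho(P)} H(\lambda)$.

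Then I would convert this into a bound on $H$. Dividing by $\lambda^2 H(\lambda)$ and setting $K(\lambda) := \tfrac1\lambda \log H(\lambda)$, one checks $K'(\lambda) = \frac{\lambda H'(\lambda) - H(\lambda)\log H(\lambda)}{\lambda^2 H(\lambda)} \le \frac{v(f)}{2\rho(P)}$, so $K$ has bounded derivative. Since $K(\lambda)\to \Ex_\pi f$ as $\lambda\to 0$ (by l'Hôpital, as $H(0)=1$, $H'(0)=\Ex_\pi f$), integrating gives $K(\lambda)\le \Ex_\pi f + \frac{\lambda v(f)}{2\rho(P)}$ for $\lambda\ge 0$, i.e. $H(\lambda)\le \exp\big(\lambda\Ex_\pi f + \frac{\lambda^2 v(f)}{2\rho(P)}\big)$. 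Finally, Markov's inequality on $e^{\lambda(f-\Ex_\pi f)}$ gives $\Pr_\pi(f-\Ex_\pi f\ge a)\le \exp\big(\frac{\lambda^2 v(f)}{2\rho(P)} - \lambda a\big)$, and optimising over $\lambda\ge 0$ with $\lambda = \frac{a\rho(P)}{v(f)}$ yields the claimed bound $\exp\big(-\frac{\rho(P)a^2}{2v(f)}\big)$.

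The main obstacle is the Dirichlet-form estimate: one needs the right pointwise inequality relating $(e^s-e^t)(s-t)$ to $(s-t)^2(e^s+e^t)$ with a constant good enough to produce exactly the factor $v(f)$ (rather than, say, $2v(f)$), and care is needed that reversibility is used to symmetrise the $e^{\lambda f(x)}+e^{\lambda f(y)}$ terms so that the one-step variance $v(f)$ appears with the stated normalisation. The rest — the differential inequality, the substitution $K=\tfrac1\lambda\log H$, the limit at $0$, and the final optimisation — is routine. One should also note that it suffices to prove the one-sided bound; applying it to $-f$ and adding gives the two-sided statement of \Cref{cor:concentration}, and for a $c$-Lipschitz $f$ one has $v(f)\le c^2$.
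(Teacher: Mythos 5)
Your proposal is correct and is essentially the paper's own proof: the same Herbst argument, the same pointwise inequality (your $(e^s-e^t)(s-t)\le\tfrac12(s-t)^2(e^s+e^t)$ is the paper's $z(e^z+1)\ge 2(e^z-1)$ in disguise), the same symmetrisation via reversibility to make $v(f)$ appear, and the same monotonicity of $\tfrac1\lambda\log H(\lambda)$ followed by Markov and optimisation; the only cosmetic difference is that the paper absorbs the $\lambda^2 v(f)/(2\rho(P))$ term into the test function $F_t=\exp(tf-ct^2)$. One small nit: of your two suggested justifications for the pointwise inequality, only the convexity one (e.g.\ $\frac{e^s-e^t}{s-t}=\int_0^1 e^{us+(1-u)t}\,\mathrm{d}u\le\frac{e^s+e^t}{2}$) delivers the factor $\tfrac12$; the mean-value-theorem bound $e^\xi\le e^s+e^t$ as stated loses a factor of $2$.
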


\begin{proof}
  For any $x\in \Omega$ and $t\in(0,+\infty)$, let
  \begin{align*}
    F_t(x) \defeq \exp\left(tf(x)-ct^2\right),
  \end{align*}
  where $c\defeq\frac{v(f)}{2\rho(P)}$.
  We will use the inequality
  \begin{align}\label{eqn:cosh}
    z(e^z+1) \geq 2(e^z-1),
  \end{align}
  which holds for $z\geq 0$. 
  To see this, notice that at $z=0$ the equality holds, 
  and for $z>0$ the derivative of the left is larger than that of the right.
  
  If $f(x)\geq f(y)$, we set $z = t(f(x)-f(y))$ in \eqref{eqn:cosh} and obtain
  \begin{align*}
    t(f(x)-f(y))(F_t(x)+F_t(y))\geq 2(F_t(x)-F_t(y)),
  \end{align*}
  which in turn implies that
  \begin{align}\label{eqn:F}
    (F_t(x)-F_t(y))(f(x)-f(y))\leq \frac{t}{2}(F_t(x)+F_t(y))(f(x)-f(y))^2.
  \end{align}
  Notice that \eqref{eqn:F} also holds even if $f(x)<f(y)$ by swapping $x$ and $y$.
  Thus, we have that
  \begin{align*}
    \Diri{P}{F_t, \log F_t} & = \frac{t}{2}\sum_{x\in \Omega}\sum_{y\in \Omega}\pi(x)P(x,y)(F_t(x)-F_t(y))(f(x)-f(y)) 
    \tag{by \eqref{eqn:Dirichlet-reversible}}\\
    & \leq \frac{t^2}{4}\sum_{x\in \Omega}\sum_{y\in \Omega}\pi(x)P(x,y)(F_t(x)+F_t(y))(f(x)-f(y))^2 \tag{by \eqref{eqn:F}} \\
    & = \frac{t^2}{2}\sum_{x\in \Omega}\pi(x)F_t(x)\sum_{y\in \Omega}P(x,y)(f(x)-f(y))^2 \tag{by the reversibility of $P$} \\
    & \leq \frac{t^2}{2}v(f)\Ex_{\pi} F_t.
  \end{align*}
  This, together with $\Diri{P}{F_t, \log F_t}\geq \rho(P)\Ent{\pi}{F_t}$ (recall the definition of $\rho(P)$), yields
  \begin{align*}
    \Ent{\pi}{F_t}\leq ct^2\Ex_{\pi} F_t.
  \end{align*}
  By noticing that
  \begin{align*}
    \frac{d}{dt}\left(\frac{log\Ex_{\pi}F_t}{t}\right) = \frac{\Ent{\pi}{F_t}-ct^2\Ex_{\pi} F_t}{t^2\Ex_{\pi} F_t} \leq 0,
  \end{align*}
  we deduce that for any $t>0$,
  \begin{align*}
    \frac{log\Ex_{\pi}F_t}{t} \leq \lim_{\hspace{.15cm}h \to 0^+}\frac{log\Ex_{\pi}F_h}{h} = \Ex_{\pi}f,
  \end{align*}
  or equivalently,
  \begin{align*}
    \Ex_{\pi}F_t & \le \exp\left( t\Ex_{\pi}f \right).
  \end{align*}
  Finally, by Markov inequality, for any $a\geq 0$,
  \begin{align*}
    \Pr_{x\sim\pi}(f(x)-\Ex_{\pi} f \geq a) & =~ \Pr_{x\sim\pi}\big(F_t(x)\ge\exp(t\Ex_{\pi}f-ct^2+ a t)\big)\\
      & \le~ \exp\left(ct^2- a t\right),
  \end{align*}
  where the right hand side is minimized for $t=\frac{a}{2c}=\frac{a\rho(P)}{v(f)}$.
\end{proof}

\Cref{cor:concentration} follows from applying \Cref{lem:herbst-argument} to both $f$ and $-f$ together with \Cref{thm:main}.
We could also apply \Cref{lem:herbst-argument} together with \Cref{thm:main-complete} to get concentration inequalities for all $\pi_k$.

For a Lipschitz function $f:\Omega\rightarrow\=R$ with Lipschitz constant $c$ (under the graph distance in the bases-exchange graph), 
we have that $v(f) \leq c^2$. 
Thus, by \Cref{cor:concentration}, such a Lipschitz function satisfies the following concentration inequality:
\begin{align*}
  \Pr_{x\sim\pi}(\abs{f(x)-\Ex_{\pi} f} \geq a) ~\leq~ 2\exp\left(-\frac{a^2}{2rc^2}\right),
\end{align*}
when $\pi$ is an $r$-homogeneous strongly log-concave distribution.

For general matroids, an example is the function that counts the number of elements belonging to a specified subset of the ground set, which has Lipschitz constant $c=1$. 
More examples were given by \cite{pemantle_peres_2014} for graphic matroids,
such as functions that count the number of leaves in a spanning tree ($c=2$),
or the number of vertices with odd degrees ($c=4$).

\section*{Acknowledgements}

We thank Mark Jerrum and Prasad Tetali for their helpful comments,
and Arthur Sinulis for pointing out a saving of a factor $2$ in \Cref{cor:concentration}.
We also thank the anonymous referees whose comments have resulted in an improvement of the presentation.

Part of the work was done while HG was visiting the Simons institute for the theory of computing in the University of California --- Berkeley.

\appendix
\section{Stochastic covering property and strong log-concavity}
\label{sec:SCP}

The results obtained by \cite{pemantle_peres_2014} and \cite{HS19} only require a property which is weaker than the strong Rayleigh property (\SRP), 
namely the \emph{stochastic covering property} (\SCP).
Since strong log-concavity (\SLC) is also a generalisation of \SRP,
it is natural to wonder about the relationship between \SLC{} and \SCP.
In this section we show that \SLC{} is incomparable to \SCP.
As a result, \cref{thm:main} and \cref{cor:concentration} do not subsume the results of \cite{HS19} and \cite{pemantle_peres_2014}, respectively.
Moreover, \cref{cor:concentration} is also incomparable to the concentration bound of \cite{GV18},
whose result requires only \emph{negative regression}, a property weaker than \SCP.

First, let us define \SCP. 
For $S\subseteq [n]$ and $x,y\in \{0,1\}^S$, we say $x$ \emph{covers} $y$, denoted by $x\covers y$,
if $x=y$ or $x=y+\*e_i$ for some $i$, where $\*e_i$ is the unit vector of coordinate $i$.
In other words, $x$ is obtained from $y$ by increasing at most one coordinate.
For two distributions $\mu$ and $\nu$, we say $\mu$ \emph{stochastically covers} $\nu$,
if there is a coupling such that $\Pr_{X\sim\mu,Y\sim\nu}(X\covers Y)=1$.
With slight overload of notation, we also write $\mu\covers\nu$.
A distribution $\mu:\{0,1\}^{[n]}\rightarrow \=R_{\ge 0}$ satisfies the \SCP{} 
if for any $S\subseteq [n]$ and $x,y\in \{0,1\}^S$ such that $x\covers y$,
$\mu_y\covers \mu_x$,
where $\mu_x$ is the distribution of $\mu$ conditioned on agreeing with $x$ over the index set $S$.

Furthermore, $\mu$ is said to satisfy the \emph{negative cylinder dependence} (\NCD),
if for any $S\subseteq [n]$,
\begin{align*}
  \Ex \prod_{i\in S} X_i & \le \prod_{i\in S} \Ex X_i,\\
  \Ex \prod_{i\in S}(1-X_i) & \le \prod_{i\in S} \Ex (1-X_i),
\end{align*}
where $X_i$ is the indicator variable of coordinate $i$.
It is known that \SCP{} implies \NCD{} \citep{pemantle_peres_2014}.
However, such negative dependence even when $\abs{S}=2$ is known \emph{not} to hold for the uniform distribution over the bases of some matroids.
See \citep{HSW18} for the most comprehensive list of such examples that we are aware of.
As the uniform distribution over a matroid's bases is \SLC,
\SLC{} does not imply \SCP.

On the other hand, \SCP{} does not imply \SLC{} either.
We give a concrete example here.
Let $\mu$ be supported on the bases of the uniform matroid of rank $2$ over $4$ elements.
We choose $\mu$ such that
\begin{align*}
  \mu(\{1,1,0,0\})&\propto \theta, & \mu(\{1,0,1,0\})&\propto 2, & \mu(\{1,0,0,1\})&\propto 1, \\
  \mu(\{0,1,1,0\})&\propto 1, & \mu(\{0,1,0,1\})&\propto 1, & \mu(\{0,0,1,1\})&\propto 1.  
\end{align*}
It is straightforward to verify that if $0\le \theta<3-2\sqrt{2}\approx 0.17157$ or $\theta>3+2\sqrt{2}\approx 5.82843$, then \SLC{} fails.
However, \SCP{} holds as long as $0\le \theta\le 6$.
To see the latter claim, first verify that the distribution conditioned on choosing any $i\in[4]$ stochastically dominates the one conditioned on not choosing $i$.
Then notice that in a homogeneous distribution, such stochastic dominance is the same as stochastic covering.

Here is some insight on how to find an example such as the above.
When the generating polynomial $g_{\mu}$ is homogeneous and quadratic, 
it is \SLC{} if and only if it has the \SRP{} \citep{BH19},
which in turn is equivalent to the following condition as $g_{\mu}\in\=R[x_1,\dots,x_n]$ is multiaffine:
\begin{align}\label{eqn:ND}
  \frac{\partial}{x_i} g_{\mu}(\*x)\cdot\frac{\partial}{x_j} g_{\mu}(\*x) \ge g_{\mu}(\*x)\cdot\frac{\partial^2}{\partial x_i\partial x_j} g_{\mu}(\*x),
\end{align}
for any $i,j\in[n]$ and $\*x\in\=R^n$.
See \citep{Bra07}.
If we plug in $\*x=\*1$, then \eqref{eqn:ND} becomes negative dependence for a pair of variables,
which is a special case of \NCD{} and thus a necessary condition for \SCP.
In our example, we choose $\mu$ so that \eqref{eqn:ND} holds for $\*x=\*1$ but not for an arbitrary $\*x\in\=R^n$.
It turns out that our choice is also sufficient for \SCP{} in this particular setting.

\bibliographystyle{plainnat} 
\bibliography{Log-Sob}

\end{document}